\let\origsection=\section \def\section{\@ifstar{\origsection*}{\mysection}} 
\def\mysection{\@startsection{section}{1}\z@{.7\linespacing\@plus\linespacing}{.5\linespacing}{\normalfont\scshape\centering\S}}
\renewcommand{\PrintDOI}[1]{\doi{#1}}
\numberwithin{equation}{section}
\numberwithin{figure}{section}
\let\polishlcross=\l
\def\l{\ifmmode\ell\else\polishlcross\fi}
\def\paragraph#1{%
  \noindent\textbf{#1.}\enspace}
\let\emptyset=\varnothing
\let\setminus=\smallsetminus
\let\sm=\setminus
\def\moverlay{\mathpalette\mov@rlay}
\def\mov@rlay#1#2{\leavevmode\vtop{   \baselineskip\z@skip \lineskiplimit-\maxdimen
   \ialign{\hfil$\m@th#1##$\hfil\cr#2\crcr}}}
\newcommand{\charfusion}[3][\mathord]{
    #1{\ifx#1\mathop\vphantom{#2}\fi
        \mathpalette\mov@rlay{#2\cr#3}
      }
    \ifx#1\mathop\expandafter\displaylimits\fi}
\DeclareFontFamily{U}  {MnSymbolC}{}
\DeclareSymbolFont{MnSyC}         {U}  {MnSymbolC}{m}{n}
\DeclareFontShape{U}{MnSymbolC}{m}{n}{
    <-6>  MnSymbolC5
   <6-7>  MnSymbolC6
   <7-8>  MnSymbolC7
   <8-9>  MnSymbolC8
   <9-10> MnSymbolC9
  <10-12> MnSymbolC10
  <12->   MnSymbolC12}{}
\DeclareMathSymbol{\powerset}{\mathord}{MnSyC}{180}
\let\epsilon=\varepsilon
\theoremstyle{plain}
\newtheorem{thm}{Theorem}[section]
\newtheorem{lemma}[thm]{Lemma}
\newtheorem{proposition}[thm]{Proposition}
\newtheorem{problem}[thm]{Problem}
\newtheorem*{claim*}{Claim}
\newtheorem{claim}{Claim}[]
\newtheorem{thm-intro}{Theorem}[]
\newtheorem{conj-intro}[thm-intro]{Conjecture}
\newtheorem{question-intro}[thm-intro]{Question}
\theoremstyle{definition}
\newtheorem{example}[thm]{Example}
\newtheorem*{example*}{Example}
\newcommand{\cP}{\mathcal P}
\newcommand{\dom}{\textnormal{dom}}
\newenvironment{claimproof}
{\begin{proof}
 [Proof.]
 \vspace{-1.5\parsep}
}
{\renewcommand{\qed}{\hfill $\Diamond$} \end{proof}}
\newcommand\thankssymb[1]{\textsuperscript{\@fnsymbol{#1}}}
\begin{document}

\author[M.~Hamann]{Matthias Hamann\thankssymb{2}}
\address{Matthias Hamann, University of Hamburg, Department of Mathematics, Bundesstr. 55, 20146 Hamburg, Germany}
\email{\tt matthias.hamann@math.uni-hamburg.de}
\thanks{\thankssymb{2} Funded by the Deutsche Forschungsgemeinschaft (DFG) - Project No.\ 549406527.}

\author[K.~Heuer]{Karl Heuer}
\address{Karl Heuer, Technical University of Denmark, Department of Applied Mathematics and Computer Science, Richard Petersens Plads, Building 322, 2800 Kongens Lyngby, Denmark}
\email{\tt karheu@dtu.dk}

\title[]{An end degree for digraphs}

\date\today

\keywords{infinite graphs, digraphs, ends, end degrees, defining sequences}

\subjclass[2020]{05C63, 05C20}

\begin{abstract}
In this paper we define a degree for ends of infinite digraphs.
The well-definedness of our definition in particular resolves a problem by Zuther.
Furthermore, we extend our notion of end degree to also respect, among others, the vertices dominating the end, which we denote as combined end degree.
Our main result is a characterisation of the combined end degree in terms of certain sequences of vertices, which we call end-exhausting sequences.
This establishes a similar, although more complex relationship as known for the combined end degree and end-defining sequences in undirected graphs.
\end{abstract}

\maketitle

\section{Introduction}
\label{sec:intro}
The notion of ends became crucial for analysing the structure of infinite graphs.
An \textit{end} of a graph is an equivalence class of one-way infinite paths, where two such paths are called \textit{equivalent} if they are joined by infinitely many disjoint paths.
Degree parameters were defined for ends as well, see e.g.~\cites{Bruhn_Stein_end_degree, S2011}, where the basic definition is as follows.
The \textit{degree} of an end $\omega$ of a graph is the supremum (within $\mathbb{N} \cup \{ \infty \}$) of the number of disjoint one-way infinite paths in $\omega$.
It is a non-trivial theorem by Halin~\cite{Halin_end-degree} that the supremum in this definition is actually an attained maximum.
End degrees turned out to be useful parameters for infinite graphs, e.g. for characterising a topological notion of infinite cycles~\cite{Bruhn_Stein_end_degree}, or when studying extremal questions regarding the existence of infinite grid-like subgraphs~\cite{S2011}.

A different way to describe the degree of an end is by certain sequences of nested finite vertex separators, so-called \textit{defining sequences}.
It was shown in~\cite{Gollin_Heuer_combined_end_deg} that one can characterise the degree of an end together with the number of vertices dominating it, also referred to as \textit{combined end degree}, via the sizes of the separators within defining sequences.
Here, a vertex $v$ is said to \textit{dominate} an end $\omega$ if there exist infinitely many paths from $v$ to a one-way infinite path in~$\omega$ which are all disjoint except from~$v$.

\medskip

In this paper we shall consider directed graphs, briefly denoted as \emph{digraphs}, which are infinite, and we shall define analogous concepts of end degrees as mentioned above for undirected graphs.
For this we follow a notion of ends of digraphs defined by Zuther~\cites{Z1997,Z1998}, which is a natural and analog definition to the one for undirected graphs.
An \textit{end} of a digraph is defined as an equivalence class of rays and anti-rays, where a (\textit{anti-})\textit{ray} is an orientation of a one-way infinite path such that each edge is oriented towards (resp.~away from) infinity (see Section~\ref{sec:prelims} for a precise definition).
Zuther called two rays or anti-rays $R_1, R_2$ of a digraph $D$ \textit{equivalent} if there exist infinitely pairwise disjoint directed paths from $R_1$ to $R_2$ and vice versa.
Note that this definition allows that e.g. $R_1$ is a ray and $R_2$ is an anti-ray.

The first result of this paper, Theorem~\ref{thm:endDeg}, resolves a problem stated by Zuther~\cite{Z1997}*{Problem~2} and proves that an end of a digraph which contains any finite number of disjoint rays also contains infinitely many disjoint rays.
Hence, Theorem~\ref{thm:endDeg} is an analogous result to the aforementioned theorem by Halin~\cite{Halin_end-degree}, and allows us to define the \textit{in-degree} (resp.~\textit{out-degree}) of an end as the maximum number of disjoint rays (resp.~anti-rays) in that end.

The natural question arises whether an end with infinite in- and out-degree might admit a system of pairwise disjoint rays and anti-rays witnessing both of these degrees.
By Theorem~\ref{thm:InfRaysInfAntiRays} we answer this question negatively and construct a digraph with infinite in- and out-degree where each ray intersects each anti-ray.

The main contribution of this paper is the introduction of \textit{end-exhausting sequences}, a concept for ends of digraphs similar to defining sequences for ends of undirected graphs.
Similarly, although more complex as for undirected graphs, we define a \textit{combined in-degree} (and \textit{out-degree}) for ends, and prove an equality to a parameter solely based on end-exhausting sequences in the following main result of this paper:

\begin{thm}\label{thm:main}
Let $D$ be a digraph and let $\omega$ be an end of~$D$ that contains at least one but at most countably many rays.
Then the combined in-degree of~$\omega$ is the same as
\[
\inf\left\{\liminf_{i\in\mathbb N}|U_i|\Bigm|(U_i)_{i\in\mathbb N} \text{ is an }\omega\text{-exhausting sequence}\right\},
\]
where both values are considered in~$\mathbb N\cup\{\infty\}$.
\end{thm}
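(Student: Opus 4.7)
The plan is to prove the two natural inequalities separately. Write $d$ for the combined in-degree of $\omega$ and $\alpha$ for the infimum on the right-hand side of the displayed equation.

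For the direction $d\leq\alpha$, I would fix an arbitrary $\omega$-exhausting sequence $(U_i)_{i\in\mathbb N}$ and show that $d\leq\liminf_i|U_i|$. The idea is that every object counted by the combined in-degree, whether a ray in $\omega$ or a vertex dominating $\omega$ in the appropriate directed sense, must leave a distinct trace in each $U_i$ from some index on. Given any pairwise disjoint finite family of $k$ such witnesses, a directed Menger-type argument should yield $k$ pairwise (internally) disjoint directed paths hitting $U_i$ in $k$ distinct vertices whenever $i$ is large enough for the tails of the rays to have crossed $U_i$. This forces $k\leq|U_i|$ for cofinitely many $i$, and then passing to the supremum over $k$ and the infimum over sequences gives $d\leq\alpha$.

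For the direction $d\geq\alpha$, I would construct an explicit $\omega$-exhausting sequence $(U_i)$ with $\liminf_i|U_i|\leq d$. Using Theorem~\ref{thm:endDeg}, fix a maximum family $\mathcal R$ of pairwise disjoint rays in $\omega$ together with a maximum disjoint family of vertices dominating $\omega$ from the relevant side, the two jointly witnessing $d$. Exploiting the hypothesis that $\omega$ contains at most countably many rays, enumerate them as $R^{(0)},R^{(1)},\dots$ and build $(U_i)$ inductively: at step $i$, take $U_i$ to be a minimum directed separator, obtained via the infinite directed version of Menger's theorem, that cuts sufficiently late tails of $R^{(0)},\dots,R^{(i)}$ and of all rays in $\mathcal R$ away from chosen finite "core" sets, while respecting the nesting and the formal definition of an $\omega$-exhausting sequence. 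Minimality of each separator combined with the fact that $\mathcal R$ and the dominating vertices already witness $d$ will give $|U_i|\leq d$, and the diagonal enumeration ensures every ray of $\omega$ is eventually separated off, so $(U_i)$ really is $\omega$-exhausting.

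The main obstacle is this lower-bound construction. Unlike in the undirected setting, where a single Menger step controls both rays and dominating vertices at once, the combined in-degree in a digraph is a hybrid parameter: rays must be oriented toward the end and dominating vertices feed the end through directed path systems whose orientations need not match those of the rays. Consequently the $U_i$ have to be chosen so that simultaneously \emph{(i)} they form a legitimate $\omega$-exhausting sequence in the sense of the paper, \emph{(ii)} every ray of $\omega$, not only those in $\mathcal R$, is eventually pushed past all of them, and \emph{(iii)} their sizes stay bounded by $d$. The countability hypothesis is precisely what makes \emph{(ii)} achievable through diagonalisation, and I expect most of the technical effort to go into orchestrating \emph{(i)}--\emph{(iii)} compatibly within the inductive step.
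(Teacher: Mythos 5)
Your proposal treats the combined in-degree as if it were ``maximum number of disjoint rays in $\omega$ plus number of vertices dominating $\omega$'', as in the undirected case. But the paper's definition is $\Delta^-(\omega)=d^-(\omega)+\inf\{|S|:S\text{ separates }\omega^-\cup\dom(\omega)\text{ from }\omega\}$, and the set $\omega^-$ of ends strictly below $\omega$ is the essential new feature of the directed setting; your argument never engages with it. This is not a cosmetic omission: in Example~\ref{ex:combDegree} the end $\omega$ containing $R$ has $d^-(\omega)=1$ and $\dom(\omega)=\emptyset$, yet $\Delta^-(\omega)=2$, the extra unit coming entirely from the need to separate the end containing $R^-$ (a ``stretched-out dominating vertex''). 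Your upper-bound argument ($d\leq\liminf_i|U_i|$), which counts traces left in $U_i$ by disjoint rays of $\omega$ and by dominating vertices, produces only the bound $1\leq|U_i|$ in that example and cannot reach $2$. Moreover, even genuine dominating vertices need not lie in any $U_i$; what is actually true (and requires the substantial Lemma~\ref{lem:sequence}) is that some finite set separating $\dom(\omega)$, and each non-exhausted end of $\omega^-$, from $\omega$ must lie in all but finitely many $U_i$. The paper's proof is forced to introduce an intermediate quantity $\delta^-(\omega)$, defined via a partition $(A,B)$ of $\omega^-\cup\{\omega\}$ in which the ends in $B$ contribute their full in-degrees $\sum_{\eta\in B}d^-(\eta)$ rather than a single separating vertex each, and to prove $K(\omega)=\delta^-(\omega)=\Delta^-(\omega)$; nothing in your sketch plays this role.

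The lower-bound direction has the mirror-image gap. An $\omega$-exhausting sequence must satisfy the forward condition that any ray of $\omega$ meeting $U_i$ also meets $U_{i+1}$. A ray of $\omega$ can escape past a ``minimum separator cutting late tails of the rays in $\mathcal R$'' by detouring through a dominating vertex or through a ray of some $\eta\in\omega^-$ and re-entering $\omega$ beyond $U_{i+1}$; so the sets you construct would in general fail to be $\omega$-exhausting. The paper handles this by building, for each of the finitely many ends $\eta_j$ in $B$, a sequence $(U_i^j)$ via Lemma~\ref{lem:weakExh}, proving (Claims~\ref{clm:exhausting0} and~\ref{clm:exhausting1}) that $S\cup\bigcup_{j<|B|}U_1^j$ permanently separates $\omega^-\cup\dom(\omega)$ from $\omega$, and then including this fixed finite set in \emph{every} term $V_i$ of the final sequence. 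Your diagonalisation over the countably many rays is also not how countability enters: it is needed only to guarantee that some $\omega$-exhausting sequence exists at all (Proposition~\ref{prop:exhaust_sequence_exist}), so that the infimum $K(\omega)$ is not taken over the empty set.
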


Qualitatively, Theorem~\ref{thm:main} establishes the same duality between end-exhausting sequences and combined in-degrees (or out-degrees) of ends for digraphs as it is known for end-defining sequences and the combined end degree in the undirected case.

The structure of this paper is as follows. 
After introducing some terminology in Section~\ref{sec:prelims}, we prove in Section~\ref{sec:deg} that the in- and out-degree of an end is well-defined.
In Section~\ref{sec:ex} we construct a digraph containing infinitely many disjoint rays and infinitely many disjoint anti-rays such that each ray intersects each anti-ray.
In Section~\ref{sec:endsequence} we define end-exhausting sequences and the combined in- and out-degree of ends, followed by the proof of Theorem~\ref{thm:main}.
Finally, we briefly discuss in Section~\ref{sec:edge deg} how the results from Section~\ref{sec:deg} and Section~\ref{sec:ex} can be proved when edge-disjoint rays (and anti-rays) are considered instead of vertex-disjoint ones.

\section{Preliminaries}
\label{sec:prelims}

For general facts and notation regarding graphs we refer the reader to~\cite{diestel}, regarding digraphs in particular to~\cite{bang-jensen}.

We call a digraph $D$ \textit{weak} if it is weakly connected.
For the sake of brevity we call a directed cycle just a \textit{dicycle} and a directed path just a \textit{dipath}.
Given a dipath $P$ containing two vertices $a, b$ such that $b$ is reached from $a$ via~$P$, we define $aPb$ as the subdipath of $P$ starting at $a$ and ending in $b$.
Given two vertex sets $A$, $B$, we call a dipath $P$ an \emph{$A$--$B$ dipath} if $P$ starts in $A$, ends in $B$ and is internally disjoint from $A \cup B$.

We call a weak digraph where each vertex has in- and out-degree $1$ except one vertex $v$ which has in-degree (resp.\ out-degree) $0$ and out-degree (resp.\ in-degree) $1$ a \textit{ray} (resp.\ \textit{anti-ray}).
The vertex $v$ is called the \textit{starting vertex} (resp.\ \textit{end vertex}) of the ray (resp.\ anti-ray).
We say that a ray \textit{starts} in a vertex set $A$ if it has its starting vertex in~$A$.
Given a ray $R$ with starting vertex $v$ and some $x \in V(R)$ we denote by $Rx$ the subdipath $vRx$ of $R$.
A \emph{tail} of~$R$ is a subray of~$R$.
If this tail starts at~$x$, then we denote it by~$xR$.
Similarly, for an anti-ray $Q$ with end vertex~$v$ and some $x\in V(Q)$, we denote by $xQ$ the subdipath $xQv$ of~$Q$ and by $Qx$ the subanti-ray of~$Q$ that ends at~$x$, which we will also call a \emph{tail} of~$Q$.

Let $Q$ and~$R$ be rays or anti-rays.
We write $Q\leq R$ if there are infinitely many pairwise disjoint $Q$--$R$ dipaths and we write $Q\sim R$ if $Q\leq R$ and $R\leq Q$.
Then $\leq$ is a partial order on the set of rays and anti-rays in a digraph~$D$ and $\sim$ is an equivalence relation on that set.
The equivalence classes of~$\sim$ are the \emph{ends} of~$D$ and we can extend the relation $\leq$ to the ends: we write $\eta\leq\omega$ for ends $\eta$ and $\omega$ if there are $Q\in\eta$ and $R\in\omega$ with $Q\leq R$.
Note that $\eta\leq\omega$ if and only if $Q\leq R$ for every $Q\in\eta$ and $R\in\omega$.
In particular, we have $\eta\leq\omega$ and $\omega\leq\eta$ if and only if $\eta=\omega$.

We call an oriented tree $A$ that contains a vertex $x$ such that each vertex $y \in V(A)$ is reachable from $x$ (resp.~reaches $x$) in $A$ by a dipath, an \textit{out-arborescence} (resp.~\textit{in-arborescence}).
The vertex $x$ is called the \textit{root} of $A$.
An out-arborescence (resp.~in-arborescence) $S$ with root $c$ whose underlying tree is a star is called an \textit{out-star} (resp.~\textit{in-star}) with \textit{centre} $c$.

An undirected tree $C$ is called a \textit{comb} if it is obtained from a system $\cP$ of infinitely many pairwise disjoint finite paths and a one-way infinite path $R$ by gluing one end vertex of each path to a vertex on $R$ such that different paths are glued to different vertices on $R$.
The one-way infinite path $R$ naturally exists as a subgraph in $C$ and is called the \textit{spine} of $C$.
All those end vertices of paths in~$\cP$ that do not lie on~$R$ and those that belong to trivial paths (i.e.~paths that consist of just one vertex) are called the \textit{teeth} of $C$.
An orientation of a comb is called an \textit{out-comb} (resp.~\textit{in-comb}) if the spine of the comb is oriented as a ray (resp.~anti-ray) and each path of $\cP$ is oriented as a dipath directed away from (resp.~towards) $R$.

We shall need the following analog for digraphs of the so-called Star-Comb Lemma~\cite{diestel}*{Lemma~8.2.2} for undirected graphs.
The proof is very similar to the undirected version, but since it is short, we include it for the sake of completeness.

\begin{lemma}[Star-Comb Lemma]\label{lem:StarComb}
    Let $D$ be a digraph and let $x\in V(D)$ and $U\subseteq V(D)$ be infinite such that there exists an $x$--$u$ dipath for every $u\in U$.
    Then there exists either an out-comb with all its teeth in~$U$ or a subdivided infinite out-star with all its leaves in~$U$.
\end{lemma}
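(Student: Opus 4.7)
The plan is to first find an out-arborescence $T\subseteq D$ rooted at~$x$ with $U\subseteq V(T)$, and then do a case split according to whether $T$ has a vertex of infinite out-degree. To construct~$T$, I would enumerate $U=\{u_1,u_2,\dots\}$ and build an increasing sequence $T_0\subseteq T_1\subseteq\dots$: take $T_0$ to be some $x$--$u_1$ dipath, and pass from $T_n$ to $T_{n+1}$ by setting $T_{n+1}=T_n$ if $u_{n+1}\in V(T_n)$, and otherwise adjoining the tail $vPu_{n+1}$ of an $x$--$u_{n+1}$ dipath $P$, where $v$ is the last vertex of~$P$ that lies in $T_n$. Since each adjoined dipath meets the existing tree only at its starting vertex, the limit $T$ is an out-arborescence rooted at~$x$ containing~$U$. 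Pruning $T$ to the subgraph $T'$ formed by the union of all $x$--$u$ dipaths in~$T$ (for $u\in U$) yields an out-arborescence rooted at~$x$ in which every vertex either lies in~$U$ or has a $U$-vertex among its proper descendants; in particular, every leaf of~$T'$ is in~$U$.

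If some vertex $v$ of~$T'$ has infinite out-degree, I would pick one $U$-vertex $u_w$ from the subtree at each child~$w$ of~$v$ in~$T'$ and take the $v$--$u_w$ dipath there. These dipaths share only~$v$ (their further vertices lie in the pairwise disjoint subtrees below the children of~$v$), so together they form a subdivided infinite out-star centred at~$v$ with all leaves in~$U$. Otherwise $T'$ is locally finite, and I would build a ray $R=v_0v_1\ldots$ in~$T'$ with $v_0=x$ by always choosing $v_{i+1}$ to be a child of~$v_i$ whose subtree in~$T'$ still contains infinitely many $U$-vertices; such a child exists because $v_i$ has finite out-degree.

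The main obstacle is then to extract an out-comb from~$R$. Let $I$ consist of those indices~$i$ for which either $v_i\in U$ or $v_i$ has a child in~$T'$ other than~$v_{i+1}$. The key observation is that $I$ must be infinite: otherwise there exists some~$N$ such that for every $i\geq N$ the vertex $v_i$ lies outside~$U$ and has $v_{i+1}$ as its unique child in~$T'$, which forces the subtree of~$T'$ at~$v_N$ to be exactly the tail $v_Nv_{N+1}\ldots$ and hence to contain no vertex of~$U$, contradicting our choice of~$v_N$. For each $i\in I$, I would pick $u_i\in U$ by setting $u_i=v_i$ when $v_i\in U$ and otherwise selecting $u_i$ inside the subtree below some child of~$v_i$ different from~$v_{i+1}$, and let $Q_i$ be the $v_i$--$u_i$ dipath in~$T'$ (which may be trivial). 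Since the non-starting vertices of distinct~$Q_i$ lie in pairwise disjoint side-subtrees off~$R$, the $Q_i$ are pairwise disjoint and meet~$R$ only at their starting vertices, so $R\cup\bigcup_{i\in I}Q_i$ is the desired out-comb with all teeth in~$U$.
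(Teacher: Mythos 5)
Your proof is correct and follows essentially the same route as the paper's: build an out-arborescence rooted at~$x$ reaching $U$, split on whether some vertex has infinite out-degree, and otherwise extract a ray from the root together with infinitely many disjoint dipaths into~$U$ branching off it. The only cosmetic differences are that the paper obtains the arborescence via Zorn's lemma rather than a greedy enumeration (your enumeration implicitly requires first replacing $U$ by a countably infinite subset when $U$ is uncountable, which is harmless since teeth in a subset of~$U$ suffice), and it harvests the comb's teeth by a slightly different inductive bookkeeping along the spine.
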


\begin{proof}
    By Zorn's lemma, there exists a maximal out-arborescence $T$ containing vertices of~$U$ such that for every vertex $t$ of~$V(T)$ there exists an $x$--$t$ dipath in~$T$ and such that every vertex without out-neighbour lies in~$U$.
    Since $U$ is infinite, $T$ is infinite as well.
    If $T$ has a vertex of infinite out-degree, then it contains a subdivided infinite out-star as out-arborescence with its centre as root and its leaves in~$U$.
    So let us assume that all vertices of~$T$ have finite out-degree.
    Then there exists a ray in~$T$ starting at~$x$, which we denote by $R$.
    In order to construct infinitely many pairwise disjoint $R$--$U$ dipaths in~$T$, let us assume that we have already constructed $P_0,\ldots, P_{n-1}$ such that the starting vertex of $P_i$ lies before that of~$P_j$ on~$R$ for $i<j$.
    Let $v$ be the starting vertex of~$P_{n-1}$ and let $w$ be its out-neighbour on~$R$.
    Then the edge $vw$ lies on an $x$--$u$ dipath in~$T$ for some $u\in U$.
    This dipath contains a maximal $R$--$U$ dipath~$P_n$, which is disjoint to all $P_i$ with $i < n$.
    Thus, we obtain infinitely many pairwise disjoint $R$--$U$ dipaths.
    Then $R$ together with all dipaths $P_i$ for $i\in\mathbb N$ is an out-arborescence with root~$x$, that is an out-comb.
\end{proof}

\section{End degree}
\label{sec:deg}

Zuther \cite{Z1997}*{Theorem 2.17}, see also Gut et al.\ \cite{GKR2024}, proved that every digraph that contains an arbitrarily large finite number of pairwise disjoint rays contains infinitely many pairwise disjoint rays.
Zuther posed the problem \cite{Z1997}*{Problem 2} whether this also holds when we ask all rays to lie in a common end.
We settle his problem in the positive.

\begin{thm}\label{thm:endDeg}
Let $D$ be a digraph.
\begin{enumerate}[label = {\rm (\roman*)}]
    \item\label{itm:endDeg1} If an end of~$D$ contains $n$ pairwise disjoint rays for all $n\in\mathbb N$, then it contains infinitely many pairwise disjoint rays.
    \item\label{itm:endDeg2} If an end of~$D$ contains $n$ pairwise disjoint anti-rays for all $n\in\mathbb N$, then it contains infinitely many pairwise disjoint anti-rays.
\end{enumerate}
\end{thm}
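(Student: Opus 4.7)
We prove (i); part (ii) follows by applying (i) to the digraph $\overleftarrow D$ obtained from $D$ by reversing all edges, since anti-rays of $D$ correspond to rays of $\overleftarrow D$ and this reversal sends ends of $D$ to ends of $\overleftarrow D$ preserving the relation~$\sim$.

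My plan is to argue by contradiction via Zorn's lemma. Suppose $\omega$ admits no infinite family of pairwise disjoint rays. The union of any chain of pairwise disjoint families of rays in $\omega$ is again pairwise disjoint, so Zorn yields a maximal such family $\mathcal F=\{R_1,\dots,R_k\}$, finite by assumption. Set $U:=V(R_1)\cup\dots\cup V(R_k)$. A tail-trimming observation then gives that every ray $R\in\omega$ meets $U$ in infinitely many vertices: otherwise a sufficiently late tail $xR$ would avoid $U$, lie in $\omega$ since tails are end-equivalent to their rays, and extend $\mathcal F$, contradicting maximality. Applying the hypothesis with $n=k+1$ produces pairwise disjoint rays $S_1,\dots,S_{k+1}\in\omega$; each $S_i$ meets $U$ infinitely often, so pigeonhole yields $j(i)\in\{1,\dots,k\}$ with $V(S_i)\cap V(R_{j(i)})$ infinite, and since $k+1>k$ two indices $i_1\ne i_2$ satisfy $j_0:=j(i_1)=j(i_2)$.

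The next step is to construct a ray $T\in\omega$ disjoint from $R_1,\dots,R_k$, contradicting maximality of $\mathcal F$. A natural candidate picks $v\in V(S_{i_1})\cap V(R_{j_0})$ and a later $w\in V(R_{j_0})\cap V(S_{i_2})$ on $R_{j_0}$ (both intersections being infinite makes this possible), and concatenates the $S_{i_1}$-prefix up to $v$, the $R_{j_0}$-segment from $v$ to $w$, and the $S_{i_2}$-tail from $w$; since $T$ shares infinitely many vertices with $S_{i_2}\in\omega$, we have $T\in\omega$. The main obstacle will be that the $S_i$'s may infinitely intersect several $R_j$'s simultaneously, so neither the $S_{i_1}$-prefix nor the $S_{i_2}$-tail is automatically disjoint from $R_j$ for $j\ne j_0$. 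I expect to resolve this either by an induction on $k$, iteratively replacing some $R_j$ in $\mathcal F$ by a rerouted ray and re-applying the pigeonhole to a reduced family, or by invoking Lemma~\ref{lem:StarComb} to the infinite collections of $S_i$--$R_j$ dipaths witnessing $S_i\sim R_j$ in order to extract disjoint dipaths with stronger structural properties before rerouting. The heart of the argument will be converting the combinatorial pigeonhole data into genuine vertex-disjoint rays respecting edge directions.
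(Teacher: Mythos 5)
There is a genuine gap, and it sits at the core of your contradiction framework. A Zorn-maximal family of pairwise disjoint rays in $\omega$ need not be a \emph{maximum} one: its finiteness is perfectly consistent both with the hypothesis and with the conclusion of the theorem, so no contradiction can be extracted from it. Concretely, a digraph can have an end of infinite in-degree while also containing a single ray that meets every other ray of that end (a ``spanning'' ray through a grid-like structure); that singleton is then a maximal family of size $1$. Your own tail-trimming observation makes the problem visible: once you know that every ray of $\omega$ meets $U=V(R_1)\cup\dots\cup V(R_k)$ infinitely often, the \emph{only} way to contradict maximality of $\mathcal F$ is to exhibit a ray of $\omega$ avoiding $U$ --- which you have just proved cannot exist. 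The pigeonhole step does not help either: your candidate $T$ is built from a segment of $R_{j_0}$ and a tail of $S_{i_2}$ (which meets $R_{j_0}$ infinitely often), so $T$ is not disjoint from $\mathcal F$ no matter how the prefix and tail are cleaned up; the planned induction or appeal to Lemma~\ref{lem:StarComb} cannot repair a conclusion that is false.

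The paper avoids contradiction altogether and runs a direct recursive construction, which is the idea you are missing. At stage $n$ one has $n$ pairwise disjoint rays $R_1^n,\dots,R_n^n$ in $\omega$ with designated ``frozen'' initial segments $R_i^nx_i^n$. One takes $n+1$ fresh pairwise disjoint rays in $\omega$ from the hypothesis, passes to tails $Q_1,\dots,Q_{n+1}$ avoiding the frozen segments, and applies Menger's theorem to a \emph{finite} auxiliary digraph (built from the tails $x_i^nR_i^n$, the $Q_j$, and finitely many connecting dipaths) to obtain $n$ disjoint dipaths rerouting the $n$ frozen segments onto tails of $n$ of the $Q_j$; the remaining $Q_{n+1}$ supplies a new disjoint ray, and the frozen segments are properly extended. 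The nested initial segments converge to infinitely many pairwise disjoint rays. A further point your plan does not address, and which is specific to the directed setting, is certifying that the limit rays lie in $\omega$: equivalence of rays requires infinitely many disjoint dipaths in \emph{both} directions, so the construction must additionally reserve, at each stage, fresh dipaths from a fixed reference ray $R$ to each extended segment and back again, disjoint from all previously reserved ones (the sets $\mathcal P^n$ in the paper). Without this bookkeeping, even a successful disjoint-ray construction would not show the rays belong to the given end.
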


\begin{proof}
It suffices to prove \ref{itm:endDeg1}, since \ref{itm:endDeg2} follows by applying \ref{itm:endDeg1} to the digraph with all edge directions reversed.

Let $\omega$ be an end of a fixed digraph $D$ such that for all $n\in\mathbb N$ there are $n$ pairwise disjoint rays in~$\omega$ and let $R$ be a ray in~$\omega$.
For all $n\in\mathbb N$, we will recursively construct a set $\mathcal R^n=\{R_1^n,\ldots,R_n^n\}$ of $n$ pairwise disjoint rays, a set $X^n:=\{x_1^n,\ldots, x_n^n\}$ of $n$ vertices, and a set $\mathcal P^n$ of $2n$ dipaths, such that the following hold for all $n\geq 1$:
\begin{enumerate}[label = (\arabic*)]
    \item\label{itm:pfendDeg1} $\mathcal R^n\subseteq\omega$;
    \item\label{itm:pfendDeg3} $x_i^n$ lies on $R_i^n$ for all $1\leq i\leq n$;
    \item\label{itm:pfendDeg4} $R_i^{n-1}x_i^{n-1}$ is a proper starting subdipath of $R_i^nx_i^n$ for all $1\leq i\leq n-1$;
    \item\label{itm:pfendDeg5} $\mathcal P^n$ contains a dipath from $R$ to $x_i^{n-1}R_i^nx_i^n$ and a dipath from $x_i^{n-1}R_i^nx_i^n$ to~$R$ for all $1\leq i\leq n-1$ that avoid $\bigcup_{j<n}\mathcal P^j$, where $x_i^{0}$ denotes the starting vertex of $R_i$.
\end{enumerate}
Let $\mathcal R^1$ be a set consisting of a single ray $R_1^1$ in~$\omega$, let $\mathcal P^1$ consist of an $R$--$R_1^1$ dipath and an $R_1^1$--$R$ dipath and let $X^1$ consist of a vertex of~$R_1^1$ that lies after all vertices of dipaths in~$\mathcal P^1$ on~$R_1^1$.
By these definitions, \ref{itm:pfendDeg1}--\ref{itm:pfendDeg5} are satisfied for $n=1$.
Let us now assume that we have already constructed $\mathcal R^n$, $X^n$ and~$\mathcal P^n$.

Let $X$ be the set of vertices on the dipaths $R_i^nx_i^n$ for $1\leq i\leq n$ and let $\mathcal Q\subseteq\omega$ be a set of $n+1$ pairwise disjoint rays.
Let $Q_1,\ldots, Q_{n+1}$ be tails of the elements of~$\mathcal Q$ that avoid~$X$.
For all $1\leq i\leq n$ and $1\leq j\leq n+1$, let $P_{i,j}^1,\ldots,P_{i,j}^n$ be $n$ pairwise disjoint $x_i^nR_i^n$--$Q_j$ dipaths that avoid $X \sm X^n$, which exist since all considered rays lie in a common end.
For all $1\leq i \leq n$ let $h_i$ denote the last vertex on $R^n_{i}$ that lies on any of the dipaths $P_{i,j}^k$.
Now, for all $1\leq \ell \leq n+1$, let $y_{\ell}$ be a vertex on~$Q_{\ell}$ that lies after all vertices on dipaths $P_{i,j}^k$ on~$Q_{\ell}$ and after all vertices on segments $x^n_iR^n_ih_i$.

Let $D'$ be the finite subdigraph of~$D$ induced by all dipaths $x_i^nR_i^nz$, where $z$ is a starting vertex of some $P_{i,j}^k$, by all dipaths $P_{i,j}^k$, and by all dipaths $z'Q_jy_j$, where $z'$ is an end vertex of some dipath $P_{i,j}^k$.
Let $S$ be a set of less than $n$ vertices in~$D'$.
Then $S$ avoids at least one ray $x_i^nR_i^n$, at least one $Q_j$ and at least one $P_{i,j}^k$, that is, we find an $x_i^n$--$y_j$ dipath.
Menger's theorem implies that there are $n$ disjoint dipaths from $X^n$ to $\{y_1,\ldots,y_{n+1}\}$.
We may assume that the indices are such that we find dipaths $P_i$ from $x_i^n$ to~$y_i$ for all $1\leq i\leq n$.
We set $R_i^{n+1}:=R_i^nx_i^nP_iy_iQ_i$ for all $1\leq i\leq n$ and choose $R_{n+1}^{n+1}$ as a tail of $Q_{n+1}$ which is disjoint from all $R_i^{n+1}$.
Finally, we set $\mathcal R^{n+1}:=\{R_i^{n+1}\mid 1\leq i\leq n+1\}$.
By construction, \ref{itm:pfendDeg1} holds for $\mathcal R^{n+1}$.
Let $\mathcal{P}^{n+1}$ be a set of dipaths, one from $R$ to $R_i^{n+1}$ and one from $R_i^{n+1}$ to~$R$ for all $1\leq i\leq n$, such that all these dipaths avoid the vertices in $\bigcup_{j\leq n}\mathcal P^j$.
Note that these dipaths exist as all rays lie in a common end.
For $1\leq i\leq n+1$, let $x_i^{n+1}$ be a vertex on $R_i^{n+1}$ after all vertices of elements of $\mathcal P^{n+1}$.
In particular, $x_i^{n+1}$ lies after $x_i^n$ on $R_i^{n+1}$.
Then, \ref{itm:pfendDeg3}--\ref{itm:pfendDeg5} hold by construction.

Thus $\{\bigcup R_i^nx_i^n\mid n\in\mathbb N, 1\leq i\leq n\}$ is an infinite set of pairwise disjoint rays that are all equivalent to~$R$, and hence lie in~$\omega$.
\end{proof}

The \emph{in-degree} of an end $\omega$, denoted by $d^-(\omega)$, is the maximum number of pairwise disjoint rays in that end, if finite, and $\infty$ otherwise.
Analogously, we define the \emph{out-degree}, denoted by $d^+(\omega)$, with respect to anti-rays.

\section{Example}
\label{sec:ex}

In this section, we will discuss an example of a digraph with infinitely many pairwise disjoint rays and infinitely many pairwise disjoint anti-rays such that every ray and every anti-ray share a vertex.

\begin{thm}\label{thm:InfRaysInfAntiRays}
There exists a digraph $D$ with the following properties:
\begin{enumerate}[label = \rm (\roman*)]
    \item\label{itm:example1} $D$ contains infinitely many pairwise disjoint rays.
    \item\label{itm:example2} $D$ contains infinitely many pairwise disjoint anti-rays.
    \item\label{itm:example3} Every ray and every anti-ray of~$D$ share a vertex.
\end{enumerate}
\end{thm}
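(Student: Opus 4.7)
The plan is to construct an explicit digraph $D$ on vertex set $V(D)=\mathbb N\times\mathbb N$ witnessing all three properties. The base structure would be a grid whose horizontal edges $(i,j)\to(i,j+1)$ produce a pairwise disjoint family of \emph{row rays} $R_i\colon(i,0)\to(i,1)\to(i,2)\to\cdots$, establishing~\ref{itm:example1}, and whose vertical edges $(i+1,j)\to(i,j)$ produce a pairwise disjoint family of \emph{column anti-rays} $A_j\colon\cdots\to(2,j)\to(1,j)\to(0,j)$, establishing~\ref{itm:example2}. Each canonical pair $R_i,A_j$ shares the vertex $(i,j)$.

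For~\ref{itm:example3} I would analyse the shape of an arbitrary ray. In the grid just described the out-edges from $(a,b)$ go only to $(a,b+1)$ or $(a-1,b)$, so the first coordinate is non-increasing and the second is non-decreasing along any ray; being bounded below, the first coordinate must stabilise at some value $a_\infty$, after which the ray proceeds rightward forever from some column $B$. Dually, every anti-ray stabilises on some column $b_\infty$ and proceeds upward forever from some row $C$. If one could arrange that $B=0$ and $C=0$ for every ray and every anti-ray, then the vertex $(a_\infty,b_\infty)$ would lie on a tail of both the ray and the anti-ray, and \ref{itm:example3} would follow at once.

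The main obstacle is that the plain grid already admits pathological examples: the ray $(5,100)\to(5,101)\to\cdots$ fails to meet the anti-ray $A_{50}$. To fix this, I would supplement the grid with carefully chosen auxiliary vertices and connector edges, arranged so that every forward-infinite dipath is forced to pass through $(a_\infty,0)$ on its stabilising row and every backward-infinite dipath through $(0,b_\infty)$ on its stabilising column, while simultaneously (a) not merging any two canonical rays or any two canonical anti-rays, so that the disjointness witnessing \ref{itm:example1} and \ref{itm:example2} is preserved, and (b) not creating new rays or anti-rays that circumvent the forced traversal. A case analysis on the out-neighbours of each vertex in the augmented digraph would then complete the verification of~\ref{itm:example3}; designing the augmentation so that pairwise disjointness and forced intersection coexist is the technical heart of the construction.
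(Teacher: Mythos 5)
Your base grid and your diagnosis of why it fails are sound, and in fact the paper's construction is of exactly this flavour: a staggered family of rays $R_i$, ``vertical'' edges from $R_{i+1}$ down to $R_i$ producing the anti-rays, and additional ``diagonal'' connector edges from $R_1$ back out to the $R_i$. But your argument stops precisely where the content of the theorem begins. The augmentation is never specified --- you only list properties you would like it to have --- and those properties are in tension with your own verification strategy: the moment you add edges that send a dipath back towards column $0$ (or, in the paper's picture, back out from $R_1$ to some $R_k$), the monotonicity of the coordinates along a ray is destroyed, so the ``stabilising row/column'' argument that was supposed to produce the common vertex $(a_\infty,b_\infty)$ no longer applies. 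In any such augmented digraph there can be rays (and anti-rays) that oscillate between rows forever and never stabilise, so property \ref{itm:example3} cannot be reduced to inspecting tails of the canonical rays and anti-rays; one must control completely arbitrary rays and anti-rays, which is the actual difficulty.

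The paper's proof shows what this control costs. The diagonal edges are chosen so that each ray $R_i$ receives only finitely many of them, which forces any non-stabilising ray to escape to ever higher rows and hence to meet every row infinitely often; every anti-ray is forced down to $R_1$ infinitely often and hence also meets every row infinitely often; and the digraph is kept planar so that the final intersection argument can be topological: a segment of the ray together with a segment of a row bounds a face that traps a vertex of the anti-ray, forcing the anti-ray to cross the ray. None of this follows from the wish-list you give for the augmentation, and it is not even clear that an augmentation satisfying your postulates exists without further ideas. So the gap is not a routine case analysis left to the reader: the explicit construction and the planarity-based verification of \ref{itm:example3} together constitute the proof, and both are missing from your proposal.
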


\begin{proof}
For all $i\in\mathbb N$ and $i':=(\sum^{i-1}_{j=1}j)+1$, let $R_i=x^i_{i'}x^i_{i'+1}\ldots$ be a ray.
Let $D$ be the union of all rays~$R_i$ with additional edges $x^{i+1}_{k}x^i_k$ for all $i\in\mathbb N$ and all feasible~$k$ and with additional edges from $x^1_k$ with $k=(\sum^{i}_{j=1}j)+k'$ to $x^i_{i''}$ with $i''=(\sum^{i-1}_{j=1}j)+k'$, for every $k' \in \{1, \ldots, i\}$.
We call these latter edges \emph{diagonal}.
Note that this digraph is planar.
See Figure~\ref{fig:counterexample}~\subref{subfig:counter} for one picture of $D$ and Figure~\ref{fig:planer_counterexample}~\subref{subfig:planar_counter} for a planar drawing of~$D$.

\begin{figure}[ht]
    \centering
       \begin{subfigure}[b]{0.48\linewidth}
    \centering
\begin{tikzpicture}
    \draw[ -{Latex[length=2.75mm]}] (0,0.5) -- (0,5.25);
    \foreach \y in {0.5,1,1.5,2,2.5,3,3.5,4,4.5} {
        \fill (0,\y) circle (2pt);
    }
    
    \draw (0,-0.25) node[anchor=south] {$R_1$};

    \draw[ -{Latex[length=2.75mm]}] (1,1) -- (1,5.25);
    \foreach \y in {2,2.5,3,3.5,4,4.5} {
        \fill (1,\y) circle (2pt);
    }
        \fill (1,1) circle (2pt);
        \fill (1,1.5) circle (2pt);

    \draw (1,0.25) node[anchor=south] {$R_2$};

    \draw[ -{Latex[length=2.75mm]}] (2,2) -- (2,5.25);
    \foreach \y in {2,2.5,3,3.5,4,4.5} {
        \fill (2,\y) circle (2pt);
    }

    \draw (2,1.25) node[anchor=south] {$R_3$};

\draw[blue, -{Latex[length=2.75mm]}] (1,1) -- (0,1);
\draw[blue, -{Latex[length=2.75mm]}] (2,2) -- (1,2);
\draw[blue, -{Latex[length=2.75mm]}] (1,2) -- (0,2);
\draw[blue, -{Latex[length=2.75mm]}] (0,2) -- (1,1);
\draw[blue, -{Latex[length=2.75mm]}] (0,3.5) -- (2,2);
\draw[blue, -{Latex[length=2.75mm]}] (2,3.5) -- (1,3.5);
\draw[blue, -{Latex[length=2.75mm]}] (1,3.5) -- (0,3.5);

\draw[blue, -{Latex[length=2.75mm]}] (2.85,3.5) -- (2,3.5);

\draw[red, -{Latex[length=2.75mm]}] (1,1.5) -- (0,1.5);
\draw[red, -{Latex[length=2.75mm]}] (2,2.5) -- (1,2.5);
\draw[red, -{Latex[length=2.75mm]}] (1,2.5) -- (0,2.5);
\draw[red, -{Latex[length=2.75mm]}] (0,2.5) -- (1,1.5);
\draw[red, -{Latex[length=2.75mm]}] (0,4) -- (2,2.5);
\draw[red, -{Latex[length=2.75mm]}] (2,4) -- (1,4);
\draw[red, -{Latex[length=2.75mm]}] (1,4) -- (0,4);

\draw[red, -{Latex[length=2.75mm]}] (2.85,4) -- (2,4);

\draw[orange, -{Latex[length=2.75mm]}] (2,3) -- (1,3);
\draw[orange, -{Latex[length=2.75mm]}] (1,3) -- (0,3);
\draw[orange, -{Latex[length=2.75mm]}] (0,4.5) -- (2,3);
\draw[orange, -{Latex[length=2.75mm]}] (2,4.5) -- (1,4.5);
\draw[orange, -{Latex[length=2.75mm]}] (1,4.5) -- (0,4.5);

\draw[orange, -{Latex[length=2.75mm]}] (2.85,4.5) -- (2,4.5);

    \draw (0.5,5.25);
    \fill (0.5,5.25) circle (1pt);
    \draw (0.5,5.5);
    \fill (0.5,5.5) circle (1pt);
    \draw (0.5,5.75);
    \fill (0.5,5.75) circle (1pt);    

    \draw (1.5,5.25);
    \fill (1.5,5.25) circle (1pt);
    \draw (1.5,5.5);
    \fill (1.5,5.5) circle (1pt);
    \draw (1.5,5.75);
    \fill (1.5,5.75) circle (1pt);    

    \draw (2.85,3.75);
    \fill (2.85,3.75) circle (1pt);
    \draw (3.1,3.75);
    \fill (3.1,3.75) circle (1pt);
    \draw (3.35,3.75);
    \fill (3.35,3.75) circle (1pt);

    \draw (2.675,5.175);
    \fill (2.675,5.175) circle (1pt);
    \draw (2.85,5.35);
    \fill (2.85,5.35) circle (1pt);       
    \draw (3.025,5.525);
    \fill (3.025,5.525) circle (1pt);       
     
\end{tikzpicture}
    \caption{The digraph $D$.}
\label{subfig:counter}
        \end{subfigure}
        \begin{subfigure}[b]{0.48\linewidth}
    \centering

\begin{tikzpicture}

\pgfmathsetmacro{\x}{cos(45)}
\pgfmathsetmacro{\y}{sin(45)}

\pgfmathsetmacro{\a}{cos(-5)}
\pgfmathsetmacro{\b}{sin(-5)}

\pgfmathsetmacro{\c}{cos(-10)}
\pgfmathsetmacro{\d}{sin(-10)}

\draw[blue, -{Latex[length=2.75mm]}] ({\x},{\y}) arc[start angle=45, end angle=90, radius=1];
\draw[blue, -{Latex[length=2.75mm]}] (2,0) arc[start angle=0, end angle=45, radius=2];
\draw[blue, -{Latex[length=2.75mm]}] ({2*\x},{2*\y}) arc[start angle=45, end angle=90, radius=2];

\draw[blue, -{Latex[length=2.75mm]}] (3.5,0) arc[start angle=0, end angle=45, radius=3.5];
\draw[blue, -{Latex[length=2.75mm]}] ({3.5*\x},{3.5*\y}) arc[start angle=45, end angle=90, radius=3.5];

\draw[blue, -{Latex[length=2.75mm]}] ({3.5*\c},{3.5*\d}) arc[start angle=-10, end angle=0, radius=3.5];

\draw[blue, rounded corners=2.5mm, -{Latex[length=2.75mm]}] (0,2) -- (-0.5,2) -- (-0.5,0) -- ({\x},0) -- ({\x},{\y});

\draw[blue, rounded corners=2.5mm, -{Latex[length=2.75mm]}] (0,3.5) -- (-1,3.5) -- (-1,-0.5) -- (2,-0.5) -- (2,0);

\draw[red, -{Latex[length=2.75mm]}] ({1.5*\x},{1.5*\y}) arc[start angle=45, end angle=90, radius=1.5];
\draw[red, -{Latex[length=2.75mm]}] (2.5,0) arc[start angle=0, end angle=45, radius=2.5];
\draw[red, -{Latex[length=2.75mm]}] ({2.5*\x},{2.5*\y}) arc[start angle=45, end angle=90, radius=2.5];

\draw[red, -{Latex[length=2.75mm]}] (4,0) arc[start angle=0, end angle=45, radius=4];
\draw[red, -{Latex[length=2.75mm]}] ({4*\x},{4*\y}) arc[start angle=45, end angle=90, radius=4];

\draw[red, -{Latex[length=2.75mm]}] ({4*\c},{4*\d}) arc[start angle=-10, end angle=0, radius=4];

\draw[red, rounded corners=2.5mm, -{Latex[length=2.75mm]}] (0,2.5) -- (-0.75,2.5) -- (-0.75,-0.25) -- ({1.5*\x},-0.25) -- ({1.5*\x},{1.5*\y});

\draw[red, rounded corners=2.5mm, -{Latex[length=2.75mm]}] (0,4) -- (-1.25,4) -- (-1.25,-0.75) -- (2.5,-0.75) -- (2.5,0);

\draw[orange, -{Latex[length=2.75mm]}] (3,0) arc[start angle=0, end angle=45, radius=3];
\draw[orange, -{Latex[length=2.75mm]}] ({3*\x},{3*\y}) arc[start angle=45, end angle=90, radius=3];

\draw[orange, -{Latex[length=2.75mm]}] (4.5,0) arc[start angle=0, end angle=45, radius=4.5];
\draw[orange, -{Latex[length=2.75mm]}] ({4.5*\x},{4.5*\y}) arc[start angle=45, end angle=90, radius=4.5];

\draw[orange, -{Latex[length=2.75mm]}] ({4.5*\c},{4.5*\d}) arc[start angle=-10, end angle=0, radius=4.5];

\draw[orange, rounded corners=2.55mm, -{Latex[length=2.75mm]}] (0,4.5) -- (-1.5,4.5) -- (-1.5,-1) -- (3,-1) -- (3,0);

    \draw[-{Latex[length=2.75mm]}] (0,0.5) -- (0,5.25);
    \draw (0,5.25) node[anchor=south] {$R_1$};

    \draw[-{Latex[length=2.75mm]}] ({\x},{\y}) -- ({5.25*\x},{5.25*\y});
    \draw ({5.25*\x+0.2},{5.25*\y}) node[anchor=south] {$R_2$};

    \draw[-{Latex[length=2.75mm]}] (2,0) -- (5.25,0);
    \draw (5.25,0) node[anchor=west] {$R_3$};

    \foreach \y in {0.5,1,1.5,2,2.5,3,3.5,4,4.5} {
    \fill (0,\y) circle (2pt);
    }

    \foreach \z in {1,1.5,2,2.5,3,3.5,4,4.5} {
    \fill ({\z*\x},{\z*\y}) circle (2pt);
    }

    \foreach \y in {2,2.5,3,3.5,4,4.5} {
    \fill (\y,0) circle (2pt);
    }

    \draw (3.75,-0.9);
    \fill (3.75,-0.9) circle (1pt);
    \draw (4,-1.025);
    \fill (4,-1.025) circle (1pt);
    \draw (4.25,-1.15);
    \fill (4.25,-1.15) circle (1pt);

\end{tikzpicture}

        \caption{A planar drawing of the digraph $D$.}
 \label{subfig:planar_counter}

        \end{subfigure}
\caption{Two drawings of the digraph $D$.}
\label{fig:counterexample}
\label{fig:planer_counterexample}
\end{figure}
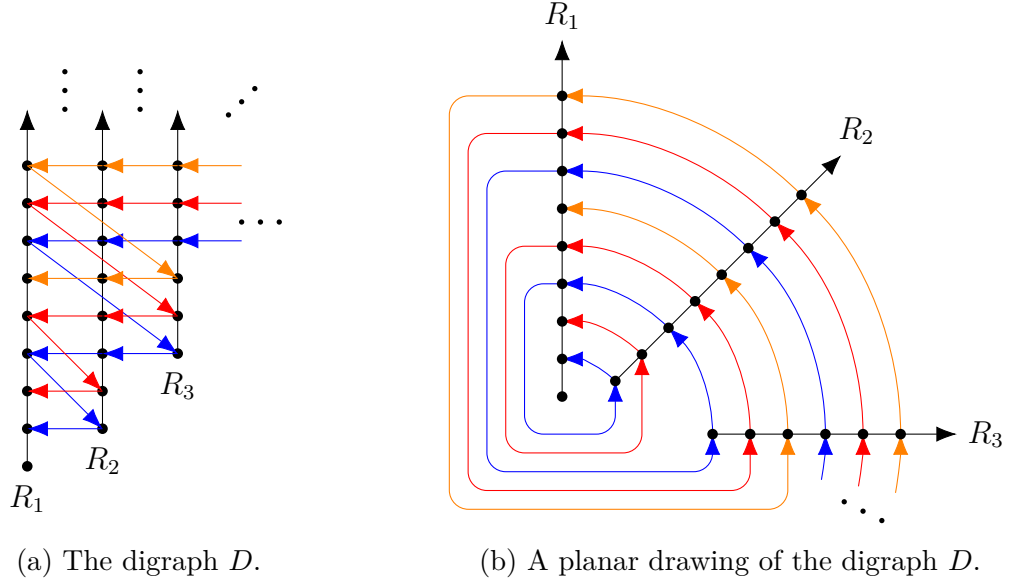

We note that~\ref{itm:example1} and~\ref{itm:example2} are trivially true.
So let $Q$ be a ray and $P$ be an anti-ray in~$D$.
Moving on $P$ along the edges in opposite direction away from the end vertex of~$P$, we must meet $R_1$ after finitely many edges of the form $x_n^{m+1}x_n^m$ or $x_n^mx_{n+1}^m$.
This implies that $P$ meets $R_1$ infinitely many times.
Thus, $P$ must also meet $R_2$ infinitely often, and so on.
So $P$ meets each $R_i$ infinitely often.
Thus, if $Q$ and some $R_i$ have a common tail, then $Q$ and $P$ must have infinitely many common vertices.

So let us assume that $Q$ has no common tail with any~$R_i$.
Note that $Q$ must leave $R_i$ towards $R_{i-1}$, and $R_{i-1}$ towards $R_{i-2}$, and so on until it meets $R_1$.
Then $Q$ must use a diagonal edge to some $R_k$ with $k \geq i$ when leaving $R_1$.
Afterwards, $Q$ must again traverse all $R_{k'}$ with $k'\leq k$.
Note that for each ray $R_i$ there are only finitely diagonal edges that are directed towards that ray.
So eventually, $Q$ must use a diagonal edge to some $R_k$ with $k > i$ when leaving $R_1$.
Hence, we obtain that $Q$ meets all $R_i$ infinitely often.
Note, furthermore, that due to planarity it is not possible for~$Q$ to traverse a ray $R_i$ first through some vertex $r^i_n$ and later through some vertex $r^i_m$ for $m < n$ as the $r^i_n$--$R_1$ subdipath of~$Q$ together with the $R_1$--$r^i_m$ subdipath of~$Q$ would cause $r^i_mQ$ to intersect $Qr^i_m$ in another vertex than $r^i_m$, which is impossible.

This implies that there exists $x_{\ell_1}^j$ and $x_{\ell_2}^j$ on some~$R_j$ and on~$Q$ with $\ell_1<\ell_2$ such that $x_{\ell_1}^jR_jx_{\ell_2}^j$ does not meet~$Q$ and that there exists $x_\ell^j$ on~$P$ such that $\ell < \ell_1$, but there is no $x_{\ell^*}^j$ on~$P$ with $\ell < \ell^* < \ell_1$.
Hence, $P$ must use a diagonal edge $x^1_{k}x_{\ell'}^{j'}$ after the vertex $x_\ell^j$ with ${\ell'} \leq {\ell}$ and $j' \geq j$ for some $k \in \mathbb{N}$.
Since $\ell < \ell_1$, the dipath $x_{\ell_1}^jQx_{\ell_2}^j$ must also use a diagonal edge $x^1_{k'}x_{\ell''}^{j''}$ with $k' > k$ for some $j'', {\ell''} \in \mathbb{N}$.
Now it follows that $x_\ell^j$ lies in the interior of the face bounded by $x_{\ell_1}^jQx_{\ell_2}^j\cup x_{\ell_1}^jR_jx_{\ell_2}^j$.
Since $P$ intersects $R_j$ again after $x_{\ell}^j$, but uses the diagonal edge $x^1_{k}x_{\ell'}^{j'}$, we get that $P$ must intersect the subdipath $x_{\ell_1}^jQx^1_{k'}$ of $Q$, which is a contradiction.
\end{proof}

This leaves the following problem open.

\begin{problem}\label{prob:ubiquity:rays+anti}
If a digraph $D$ has an end $\omega$ such that $D$ contains for all $n\in\mathbb N$ a set of $n$ rays and $n$ anti-rays that are pairwise disjoint and lie in~$\omega$, does there exist a set of infinitely many rays and infinitely many anti-rays in~$\omega$ that are pairwise disjoint?    
\end{problem}

Note that the proof method used in Theorem~\ref{thm:endDeg} does not work here since we would need to find two disjoint dipath systems, one for rerouting our initial segments of rays, and one for our endsegments of anti-rays.
This, however, is not guaranteed by an application of Menger's theorem as done before.

The previous problem is also motivated by Proposition~\ref{prop:rays+anti=double} and the following problem by Gut et al.\ \cite{GKR2024}, in that a positive answer to Problem~\ref{prob:ubiquity:rays+anti} is a corollary of Proposition~\ref{prop:rays+anti=double} and of a positive answer for Problem~\ref{prob:GKR1.3}.

\begin{problem}\cite{GKR2024}*{Problem~1.3}\label{prob:GKR1.3}
    Is the consistently oriented double ray, i.\,e.\ the weak digraph where every vertex has in-degree and out-degree $1$, ubiquitous?\footnote{A digraph $H$ is \emph{ubiquitous} if, for any digraph $D$, the existence of $n$ pairwise disjoint copies of~$H$ in $D$ for all $n\in\mathbb N$ implies the existence of infinitely many pairwise disjoint copies of~$H$ in $D$.}
\end{problem}

\begin{proposition}\label{prop:rays+anti=double}
For every $n\in\mathbb N$, if a digraph $D$ contains a set of $n$ rays and $n$ anti-rays that are all pairwise disjoint and lie in the same end, then there exists a set of $n$ pairwise disjoint double rays in~$D$ all of whose tails lie in that end.
\end{proposition}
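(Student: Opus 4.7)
The plan is to invoke Menger's theorem in an auxiliary digraph to find $n$ pairwise vertex-disjoint dipaths connecting the $Q_i$'s to the $R_j$'s under a bijective matching, and then to concatenate each such dipath with appropriate anti-ray and ray tails to form the $n$ double rays. Concretely, I would augment $D$ to a digraph $D'$ by adding a source $s$, a sink $t$, and $2n$ gate vertices $q_1,\dots,q_n,r_1,\dots,r_n$, together with edges $s\to q_i$, $q_i\to v$ for every $v\in V(Q_i)$, $v\to r_j$ for every $v\in V(R_j)$, and $r_j\to t$, while retaining every edge of $D$. By the gate construction, each $q_i$ has $s$ as its sole in-neighbour and each $r_j$ has $t$ as its sole out-neighbour, so any $n$ pairwise internally vertex-disjoint $s$--$t$ dipaths in $D'$ must use each gate on each side exactly once, thereby inducing permutations $\sigma,\tau\in S_n$.

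To apply Menger's theorem, I would verify that $D'$ has no $s$--$t$ separator of size less than $n$: such a putative $S$ would miss some $q_i$ and some $r_j$, and the hypothesis that all $Q_k,R_l$ lie in~$\omega$ implies $Q_i\leq R_j$, providing $n$ pairwise vertex-disjoint $V(Q_i)$--$V(R_j)$ dipaths in~$D$. At least one of these avoids the set $S\cap V(D)$ of size less than~$n$, yielding an $s$--$t$ path in $D'\setminus S$ and a contradiction. By Menger's theorem, $D'$ contains $n$ pairwise internally vertex-disjoint $s$--$t$ dipaths, corresponding to pairwise vertex-disjoint dipaths $P_1,\dots,P_n$ in $D$ with $P_l$ running from some $u_l\in V(Q_{\sigma(l)})$ to some $v_l\in V(R_{\tau(l)})$. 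For each $l$, letting $u_l^\star$ be the last vertex of $P_l$ in $V(Q_{\sigma(l)})$ (in the order of $P_l$'s traversal) and $v_l^\star$ the first vertex of $u_l^\star P_l$ in $V(R_{\tau(l)})$, the segment $u_l^\star P_l v_l^\star$ is internally disjoint from $V(Q_{\sigma(l)})\cup V(R_{\tau(l)})$, so $D_l:=Q_{\sigma(l)}u_l^\star\cup u_l^\star P_l v_l^\star\cup v_l^\star R_{\tau(l)}$ is a double ray with tails in~$\omega$.

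The main obstacle will be ensuring that the resulting double rays $D_l$ are pairwise vertex-disjoint: the danger is that some $P_m$ with $m\neq l$ internally visits a vertex $w$ of $V(Q_{\sigma(l)})$ that lies on the anti-ray tail $Q_{\sigma(l)}u_l^\star$ of $D_l$, and symmetrically on the ray side. I would address this by iteratively refining the Menger solution so that each $u_l^\star$ is pushed sufficiently far out on $Q_{\sigma(l)}$: since every $P_m$ is a finite dipath, only finitely many vertices of $V(Q_{\sigma(l)})$ are touched in total across the other $P_m$'s, while the equivalence $Q_{\sigma(l)}^{>N}\sim Q_{\sigma(l)}$ in~$\omega$ furnishes infinitely many pairwise disjoint $V(Q_{\sigma(l)}^{>N})$--$V(R_{\tau(l)})$ dipaths in $D$ for any $N\in\mathbb N$, of which only finitely many meet the finite union $\bigcup_{m\neq l}V(P_m)$. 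Substituting $P_l$ by such an alternative dipath starting far enough out on $Q_{\sigma(l)}$ and performing the symmetric substitution on the $R$-side -- then simultaneously choosing the truncation vertices $u_l^\star,v_l^\star$ to be further out than every conflict -- should yield the required $n$ pairwise disjoint double rays; orchestrating these substitutions consistently across all $l$ and verifying termination is the technical crux.
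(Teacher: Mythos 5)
Your setup (the gate construction, the Menger separator argument via $Q_i\leq R_j$, and the concatenation $Q_{\sigma(l)}u_l^\star\cup u_l^\star P_l v_l^\star\cup v_l^\star R_{\tau(l)}$) is sound, and you correctly locate the difficulty; but the difficulty is not resolved, and the repair you sketch does not work as stated. Because you run Menger \emph{first}, in the whole digraph, the resulting paths $P_m$ may meet the infinite tails $Q_{\sigma(l)}u_l^\star$ and $v_l^\star R_{\tau(l)}$ that you intend to keep. Your substitution replaces $P_l$ by a path avoiding only $\bigcup_{m\neq l}V(P_m)$, i.e.\ the \emph{old} paths: two replacement paths $P_l'$ and $P_m'$ need not be disjoint from each other, and each $P_l'$ may again enter the (freshly re-truncated) tails of the other double rays, forcing another round of truncation and substitution. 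If instead you choose the replacements sequentially, each new path must avoid the \emph{infinite} tails already committed to, and ``infinitely many pairwise disjoint $Q$--$R$ dipaths'' only guarantees avoidance of \emph{finite} obstacles. So the iteration has a genuine circularity, and its termination --- which you yourself flag as the crux --- is exactly what is missing.

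The fix is to reverse the order of quantifiers. First reserve a single pairwise disjoint family $\mathcal P$ containing $n$ $Q_i$--$R_j$ dipaths for \emph{every} ordered pair $(i,j)$; this is possible greedily, since for each pair only finitely many of the infinitely many pairwise disjoint connecting dipaths meet the finitely many paths already chosen. Only then choose $x_i$ on $Q_i$ and $y_j$ on $R_j$ so that the tails $Q_ix_i$ and $y_jR_j$ avoid every path of $\mathcal P$, and apply Menger inside the \emph{finite} digraph $H$ spanned by the segments $x_iQ_i$, $R_jy_j$ and the paths of $\mathcal P$, with terminal sets $X=\{x_i\mid 1\leq i\leq n\}$ and $Y=\{y_j\mid 1\leq j\leq n\}$: any set of fewer than $n$ vertices misses some $x_iQ_i$, some $R_jy_j$ and at least one of the $n$ reserved $Q_i$--$R_j$ paths, so $H$ contains $n$ pairwise disjoint $X$--$Y$ dipaths. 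These live entirely in $H$, which meets the retained infinite tails only in the attachment vertices $x_i$ and $y_j$, so gluing the tails back on causes no conflicts and the pairwise disjointness of the resulting double rays is automatic.
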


\begin{proof}
Let $R_1,\ldots, R_n$ be rays and $Q_1,\ldots, Q_n$ be anti-rays all in the same end $\omega$ of the digraph $D$ and all pairwise disjoint.
Let $\cP$ be a set of pairwise disjoint dipaths that consists of $n$ many $Q_i$--$R_j$ dipaths for all $1\leq i,j\leq n$.
This is possible to choose since all rays and anti-rays lie in $\omega$.
For all $1\leq i\leq n$, let $x_i$ be a vertex on $Q_i$ such that $Q_ix_i$ contains no vertex from any dipath in~$\cP$.
Let $y_i$ be a vertex on~$R_i$ such that $y_iR_i$ contains no vertex from any dipath in~$\cP$.
Let $H$ be the finite digraph on the final subdipaths $x_iQ_i$, the starting dipaths $R_iy_i$ and the dipaths in~$\cP$.
Then every set of less than $n$ vertices misses one dipath $x_iQ_i$, one dipath $R_jy_j$ and one $Q_i$--$R_j$ dipath $P\in\cP$.
Thus, this vertex set does not separate $X:=\{x_i\mid 1\leq i\leq n\}$ from $Y:=\{y_i\mid 1\leq i\leq n\}$.
By Menger's theorem, there exist $n$ pairwise disjoint $X$--$Y$ dipaths in~$H$ and hence in~$D$.
These dipaths together with the tails $Q_ix_i$ and $y_iR_i$ form $n$ pairwise disjoint double rays all of whose tails lie in~$\omega$.
\end{proof}

\section{End-exhausting sequences}
\label{sec:endsequence}
In this section we define a generalisation of the in-degree of an end, the so-called combined in-degree, and characterise it in terms of certain sequences of vertex sets.
Hence, we focus on ends that contain rays.
Everything can be done for the out-degree and anti-rays completely analogously, which is why we omit the details for that here.

Let $D$ be a digraph and $\omega$ an end of~$D$ which contains a ray.
Furthermore, let $(U_i)_{i\in\mathbb N}$ be a sequence of finite vertex sets of~$D$.
We say that the sequence $(U_i)_{i\in\mathbb N}$ is \emph{$\omega$-exhausting} if for every ray in~$\omega$ there exists an $i\in\mathbb N$ such that this ray contains a vertex of~$U_i$ and if a ray in~$\omega$ contains a vertex of~$U_i$, then it contains a vertex of~$U_{i+1}$.
Furthermore, note that obviously every countable digraph admits an $\omega$-exhausting sequence for every end $\omega$ that contains a ray of the digraph.
For uncountable digraphs this is not necessarily true.
The following proposition characterises the existence of exhausting sequences and is an analogue of Lemma~5.1 in~\cite{Gollin_Heuer_combined_end_deg}, which characterises the existence of so-called end-defining sequences for ends of undirected graphs.

\begin{proposition}\label{prop:exhaust_sequence_exist}
Let $D$ be a digraph and let $\omega$ be an end of~$D$ that contains a ray.
Then there exists an $\omega$-exhausting sequence if and only if there exist at most countably many disjoint rays in $\omega$.
\end{proposition}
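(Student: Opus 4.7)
My plan is to prove the two directions of the biconditional separately; both are short.

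For the easy direction, I would assume that an $\omega$-exhausting sequence $(U_i)_{i\in\mathbb N}$ exists and set $U:=\bigcup_{i\in\mathbb N}U_i$. Since each $U_i$ is finite, $U$ is at most countable. By the first defining property of an exhausting sequence, every ray in $\omega$ meets $U$, and in any family of pairwise disjoint rays in $\omega$ distinct members necessarily occupy disjoint subsets of $U$. Hence such a family has cardinality at most $|U|$, which is at most countable.

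For the converse, I would suppose $\omega$ admits no uncountable family of pairwise disjoint rays and construct an $\omega$-exhausting sequence. First, by applying Zorn's Lemma to the inclusion-ordered poset of pairwise disjoint families of rays in~$\omega$, I obtain a maximal such family $\mathcal R$, which is at most countable by hypothesis. I enumerate $\mathcal R=\{R_k\}_{k\geq 1}$ (with indices terminating at $|\mathcal R|$ if $\mathcal R$ is finite) and write the vertices of each $R_k$ in order as $v_{k,0},v_{k,1},\ldots$. Setting
\[
U_n:=\{v_{k,j}\mid 1\leq k\leq n,\ 0\leq j\leq n\},
\]
I obtain finite sets forming an increasing chain $U_1\subseteq U_2\subseteq\cdots$, which renders the second axiom of an exhausting sequence automatic. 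For the first axiom, I observe that every ray $R$ in $\omega$ must share a vertex with some $R_k\in\mathcal R$ by maximality: otherwise $\mathcal R\cup\{R\}$ would be a strictly larger pairwise disjoint family of rays in $\omega$, contradicting maximality of $\mathcal R$. That shared vertex is some $v_{k,j}$, which lies in $U_{\max(k,j)}$, as required.

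I do not anticipate any serious obstacle; the statement is essentially an exercise in the definitions. The single step worth flagging is the maximality argument in the backward direction, where one must invoke the fact that a maximal pairwise disjoint family of rays in $\omega$ automatically intersects every other ray of $\omega$ — but this is an immediate consequence of the definition of maximality.
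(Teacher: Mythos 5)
Your proof is correct and follows essentially the same route as the paper's: the forward direction is the same counting observation, and the backward direction likewise takes a maximal pairwise disjoint family of rays in~$\omega$ (at most countable by hypothesis), exhausts its vertices by an increasing sequence of finite sets, and uses maximality to guarantee that every ray of~$\omega$ meets the family. The only cosmetic differences are that you enumerate by squares rather than diagonals and that you spell out the easy direction and the Zorn's Lemma step, which the paper leaves implicit.
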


\begin{proof}
If there are uncountably many disjoint rays in $\omega$, then clearly there cannot exist an $\omega$-exhausting sequence.

Conversely, let $\mathcal R:=\{R_i=x^i_0x^i_1\ldots \mid i \in \mathbb{N} \}$ be a maximal set of countably many disjoint rays in~$\omega$.
We set
\begin{align*}
    V_1&:=\{x^1_0\}\\
    V_2&:=V_1 \cup \{x^1_1,x^2_0\}\\
    V_3&:=V_2 \cup \{x^1_2,x^2_1,x^3_0\}\\
    \vdots
\end{align*}
Let $R$ be a ray in~$\omega$.
By the maximality of~$\mathcal R$, there exists some vertex from a ray in~$\mathcal R$ that lies on~$R$.
Let $x^i_j$ be the first such vertex on $R$.
Since $x^i_j$ lies in $V_{i+j}$ and thus in $V_k$ for all $k\geq i+j$, we conclude that $(V_i)_{i\in\mathbb N}$ is an $\omega$-exhausting sequence.
\end{proof}

Let us call an end $\omega$ of a digraph $D$ \emph{countable} if it contains at most countably many disjoint rays.
So by the proposition above, every countable end with at least one ray admits an exhausting sequence.

Let $X$ and~$Y$ be disjoint sets of ends of~$D$.
We say that a vertex set $S \subseteq V(D)$ \emph{separates} $X$ from~$Y$ in~$D$ if for every $\omega\in Y$ every $R\in\omega$ has a tail~$Q$ such that every ray in elements of~$X$ that starts at a vertex of~$Q$ meets~$S$.
In case $X$ (or $Y$) is a singleton set, we ease the notation and analogously define that the end $\omega_X \in X$ (or $X$) is separated from $Y$ (or from the end $\omega_Y \in Y$).

Similarly, we define for $W \subseteq V(D)$ and a set of ends $Y$ of $D$ that a vertex set $S \subseteq V(D)$ \emph{separates} $W$ from~$Y$ in~$D$ if for every $\omega\in Y$ every $R\in\omega$ has a tail~$Q$ such that every $Q$--$W$ dipath meets~$S$.
As before, we ease the notation and make corresponding definitions in case $W$ or $Y$ are singleton sets.
Finally, we accordingly define how a set of vertices and ends is separated from a set of ends. 

For an end $\omega$ of~$D$, set
\[
\omega^-:=\{\eta <\omega\mid \eta \text{ end of }D,d^-(\eta)\geq 1\}.
\]

For a vertex $v\in V(D)$ and a ray $R$, we call an infinite family of $v$--$R$ dipaths an \emph{infinite $v$--$R$ fan} if they pairwise meet only in~$v$.

A vertex $v$ \emph{dominates} an end $\omega$ of a digraph $D$ if $\omega$ contains a ray and for every ray $R\in\omega$ there is an infinite $v$--$R$ fan and an $R$--$v$ dipath.
We denote by $\dom(\omega)$ the set of vertices dominating the end $\omega$.
Note that looking at infinitely many distinct tails of~$R$, the definition implies the existence of infinitely many distinct $R$--$v$ dipaths.
In contrary to the $v$--$R$ fan, these dipaths may pairwise intersect in more vertices than just~$v$.
For an arbitrary end $\omega$ of a digraph $D$, we define the \emph{combined in-degree} of~$\omega$, denoted by $\Delta^-(\omega)$, as
\[
d^-(\omega)+\inf\{|S|\mid
S\subseteq V(D)\text{ separates }\omega^-\cup \dom(\omega)\text{ from }\omega\}.
\]
Analogously, it is possible to define the \emph{combined out-degree} of~$\omega$.

In graphs the combined degree of an end is the maximum number of disjoint rays plus the number of vertices dominating that end.
This is known to be equal to the infimum over the sizes of the vertex sets in defining sequences of that end (see~\cite{Gollin_Heuer_combined_end_deg}).
For digraphs, the infimum over the sizes of the sets of exhausting sequences is not the same as the in-degree plus the number of dominating vertices of that end as the following example shows:

\begin{example}\label{ex:combDegree}
Let $D$ be the digraph as depicted in Figure~\ref{fig:example_end-deg}.
The in-degree of the end $\omega$ containing $R$ is~$1$ and there is no dominating vertex of that end.
Intuitively, the ray $R^-$ serves as a dominating vertex that is stretched out as a ray.
The combined in-degree of~$\omega$ is~$2$ as we can simply let $S$ consist of the bottom left vertex for the definition of the combined in-degree.
Furthermore, if $U_i$ is a set consisting of the $i$-th vertex of~$R$ together with its in-neighbour on~$R^-$, then $(U_i)_{i\in\mathbb N}$ is an exhausting sequence of~$\omega$ and there is no exhausting sequence of~$\omega$ with smaller limit inferior.

\begin{figure}[ht]
    \centering
\begin{tikzpicture}
    \draw[-{Latex[length=3mm]}] (4.7,0) -- (0,0);
    \draw[-{Latex[length=3mm]}] (4.7,0) -- (4,0);
    
    \foreach \x in {0,1,2,3,4}
        \fill (\x,0) circle (2pt);

    \draw (5.6,0) node[anchor=east] {$R^{\leftarrow}$};

    \draw[-{Latex[length=3mm]}] (0,1) -- (4.75,1);
    \foreach \x in {0,1,2,3,4}
        \fill (\x,1) circle (2pt);

    \draw (5.35,1) node[anchor=east] {$R$};

    \draw[-{Latex[length=3mm]}] (0,2) -- (4.75,2);
    \foreach \x in {0,1,2,3,4}
        \fill (\x,2) circle (2pt);

    \draw (5.6,2) node[anchor=east] {$R^{-}$};

    \draw[-{Latex[length=3mm]}] (0,0) -- (0,1);
    \draw[-{Latex[length=3mm]}] (2,0) -- (2,1);
    \draw[-{Latex[length=3mm]}] (4,0) -- (4,1);
    
    \draw[-{Latex[length=3mm]}] (1,1) -- (1,0);
    \draw[-{Latex[length=3mm]}] (3,1) -- (3,0);

    \draw[-{Latex[length=3mm]}] (0,2) -- (0,1);
    \draw[-{Latex[length=3mm]}] (1,2) -- (1,1);
    \draw[-{Latex[length=3mm]}] (2,2) -- (2,1);
    \draw[-{Latex[length=3mm]}] (3,2) -- (3,1);
    \draw[-{Latex[length=3mm]}] (4,2) -- (4,1);

    \draw[-{Latex[length=3mm]}] (0,0) arc[start angle=270, end angle=90, radius=1];

    \draw (5,0.5);
    \fill (5,0.5) circle (0.5pt);
    \draw (5.2,0.5);
    \fill (5.2,0.5) circle (0.5pt);
    \draw (5.4,0.5);
    \fill (5.4,0.5) circle (0.5pt);

    \draw (5,1.5);
    \fill (5,1.5) circle (0.5pt);
    \draw (5.2,1.5);
    \fill (5.2,1.5) circle (0.5pt);
    \draw (5.4,1.5);
    \fill (5.4,1.5) circle (0.5pt);    
\end{tikzpicture}
    \caption{The end containing $R$ has combined in-degree $2$, but contains no two disjoint rays and is not dominated by any vertex.}
    \label{fig:example_end-deg}
\end{figure}
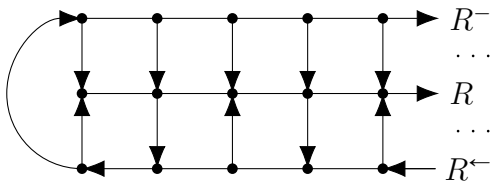

\end{example}

Our aim in the rest of this section is to show that the combined in-degree can be characterised via exhausting sequences as it is indicated by Example~\ref{ex:combDegree}.

\begin{lemma}\label{lem:weakExh}
    Let $D$ be a digraph and $\omega$ an end of~$D$ where $1 \leq d^-(\omega) < \infty$.
    Suppose there exists a finite $S \subseteq V(D)$ separating $\dom(\omega) \cup \omega^-$ from~$\omega$.
    Then there is a sequence $(U_i)_{i\in\mathbb N}$ with $U_i \subseteq V(D)$ and $|U_i|=d^-(\omega)$ for all $i\in\mathbb N$ such that every ray in $\omega$ meets some $U_i$ and if it meets $U_i$ and avoids $S$ then it also meets $U_{i+1}$.
\end{lemma}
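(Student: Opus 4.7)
Set $k:=d^-(\omega)\geq 1$ and fix $k$ pairwise disjoint rays $R_1,\ldots,R_k$ in~$\omega$ with $R_j=x_0^jx_1^j\ldots$. By replacing each $R_j$ with a suitable tail we may assume that $R_j$ is disjoint from~$S$, that every $R_j$--$\dom(\omega)$ dipath meets~$S$, and that every ray in some $\eta\in\omega^-$ starting at a vertex of~$R_j$ meets~$S$; this is possible because each $R_j$ has only finitely many vertices in the finite set~$S$, and the separator properties are preserved under taking tails. Let $C$ be the weakly connected component of~$D-S$ containing $\bigcup_jV(R_j)$. Because $S$ is finite and any two tails of rays in~$\omega$ are linked in~$D$ by infinitely many pairwise disjoint dipaths in either direction, of which at most $|S|$ can meet~$S$, every $R\in\omega$ has a tail lying in~$C$, and the collection of these tails forms a single end~$\omega_C$ of~$C$ with $d^-_C(\omega_C)=k$. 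The separator property further forces $\dom_C(\omega_C)=\emptyset$: a vertex $v\in C$ dominating $\omega_C$ in~$C$ would also dominate $\omega$ in~$D$, and the required $R_j$--$v$ dipath inside~$C$ would avoid~$S$, contradicting that $S$ separates $\dom(\omega)$ from~$\omega$.

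I now build $(U_i)_{i\in\mathbb N}$ inductively alongside an increasing chain $F_0\subseteq F_1\subseteq\cdots$ of finite subsets of~$V(C)$. Set $F_0:=U_0:=\{x_0^1,\ldots,x_0^k\}$. Given $F_i$ and~$U_i$, define
\[
F_{i+1}:=F_i\cup U_i\cup\{x_{i+1}^1,\ldots,x_{i+1}^k\}
\]
and choose $U_{i+1}\subseteq V(C)$ with $|U_{i+1}|=k$ such that every ray in~$\omega_C$ meeting~$F_{i+1}$ also meets~$U_{i+1}$. Granting such $U_{i+1}$, both conclusions are immediate. For the first, any $R\in\omega$ has a tail in~$\omega_C$ which, by the maximality of $\{R_1,\ldots,R_k\}$, meets some $x_n^j$; since $x_n^j\in F_n\subseteq F_i$ for every $i\geq n$, the cut property gives $R\cap U_i\neq\emptyset$. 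For the second, if $R\in\omega$ meets~$U_i$ while avoiding~$S$, then $R\subseteq C$ is itself a ray in~$\omega_C$ with a vertex in $U_i\subseteq F_{i+1}$, so the cut property of~$U_{i+1}$ forces $R\cap U_{i+1}\neq\emptyset$.

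The main obstacle is producing the cut $U_{i+1}$ of size exactly~$k$. The $k$ pairwise disjoint segments $x_0^jR_jx_N^j$ provide $k$ disjoint dipaths witnessing that any separator between $F_{i+1}$ and (the tails of rays in)~$\omega_C$ has size at least~$k$. For the matching upper bound I would apply the finite Menger theorem inside~$C$ to the pair $(F_{i+1},T_N)$ with $T_N:=\{x_N^1,\ldots,x_N^k\}$ for each large~$N$, obtaining a size-$k$ separator~$U^N$; a K\"onig-type compactness extraction then yields a limit set $U^*$ of size at most~$k$, which we pad to exactly~$k$ if necessary. Verifying that $U^*$ really separates $F_{i+1}$ from every ray in~$\omega_C$ is the delicate step: it uses that by maximality each such ray visits $\bigcup_jV(R_j)$ at unbounded depth, and that $\dom_C(\omega_C)=\emptyset$ prevents the limit cut from blowing up. Choosing each $U^N$ close to~$F_{i+1}$ (rather than near~$T_N$) is essential for the compactness to stabilise, and this step should be the directed analogue of the standard construction of end-defining sequences in undirected graphs.
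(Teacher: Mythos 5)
Your overall architecture matches the paper's: fix $d^-(\omega)=:k$ disjoint rays, pass to $D-S$, build a recursive sequence of size\nobreakdash-$k$ cuts, and then verify the two conclusions. The verification step at the end is fine. The gap is exactly where you say it is, and it is not a technicality that "should" work out: your plan for producing $U_{i+1}$ fails for a structural reason. You apply Menger between $F_{i+1}$ and the single level $T_N=\{x_N^1,\ldots,x_N^k\}$. But a ray in $\omega_C$ that meets $F_{i+1}$ need not pass through any prescribed level: by maximality it meets $\bigcup_j V(R_j)$ at unbounded depth, but it can perfectly well skip every level $N$ for which you happened to record a separator $U^N$. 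So even if your compactness extraction produced a genuine limit $U^*$ equal to $U^N$ for infinitely many $N$, a ray meeting $F_{i+1}$ gives no $F_{i+1}$--$T_N$ dipath for those $N$, and you cannot conclude it meets $U^*$. Separately, the extraction itself is unsubstantiated: the $U^N$ are $k$-subsets of an infinite vertex set, and nothing you say confines them to a finite region (choosing them ``close to $F_{i+1}$'' is an intention, not an argument). There is a genuine tension here: you picked the finite target $T_N$ precisely so that Menger caps the separator at size $k$, but that is exactly what makes the separation too weak to control the end.

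The paper resolves this differently. It separates the previous (size\nobreakdash-$k$) set from the \emph{infinite} set $X_m=\{x^i_j\mid j\geq m\}$, which every relevant ray is forced to meet, and it proves the existence of a size\nobreakdash-$k$ separator by contradiction rather than by a limit of finite cuts: if no $k$-set separates any $X_m$ from $U_1$ in $D-S$, Menger yields $k+1$ internally disjoint dipaths from $U_1$ to $X_m$ for every $m$; a compactness argument combined with the Star--Comb Lemma then produces $k+1$ disjoint out-combs (the star outcome is excluded because the dipaths avoid $\dom(\omega)$), and since everything avoids $S$ and $S$ separates $\omega^-$ from $\omega$, the spines are $k+1$ pairwise disjoint rays lying in $\omega$ itself, contradicting $d^-(\omega)=k$. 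Note that your write-up never actually invokes the hypothesis that $S$ separates $\omega^-$ from $\omega$; that hypothesis is what pins the limit rays into $\omega$ rather than into some end $\mu<\omega$, and any correct proof must use it at this point. To repair your argument you would need to replace $T_N$ by $X_m$ and replace the limit-of-cuts step by such a contradiction argument.
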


\begin{proof}
    Let $U_1$ be a vertex set of size $d^-(\omega)$ such that $d^-(\omega)$ many disjoint rays $R_1,\ldots, R_{d^-(\omega)}$ with $R_i=x_0^ix_1^i\ldots$ for all $1\leq i\leq d^-(\omega)$ in~$\omega$ start at~$U_1$.
    Since $S$ is finite, we may assume that no $x_j^i$ lies in $S$ and that no $R_i$--$\dom(\omega)$ dipath exists in $D-S$ for any~$i$.
    Let us suppose that there exists no $V\subseteq V(D)$ of size $|U_1|$ that separates some ${X_k:=\{x_j^i\mid j\geq k \textnormal{ and } 1\leq i\leq d^-(\omega) \}}$ from $U_1$ in $D-S$.
    By Menger's theorem, there exist $|U_1|+1$ internally disjoint $U_1$--$X_k$ dipaths in $D-S$.
    Since these dipaths avoid~$\dom(\omega)$, there exist $d^-(\omega) + 1$ disjoint out-combs with teeth in $X_\ell$ for some~$\ell\in\mathbb N$ by a compactness argument.
    Thus, their spines must lie in ends $\mu\leq\omega$.
    But as all the dipaths used to construct the out-combs avoid~$S$, the spines must be in~$\omega$.
    Thus, $\omega$ contains more than $d^-(\omega)$ pairwise disjoint rays, which is impossible.
    So there exists $U_2\subseteq V(D)$ with $|U_2|=|U_1|$ that separates some $X_k$ with $k\in\mathbb N$ from $U_1$.
    We recursively define the other sets $U_\ell$ for $\ell>2$ in an analogous way, that is, such that $U_\ell\subseteq V(D)$ with $|U_\ell|=|U_1|$ separates some $X_k$ with $k\in\mathbb N$ from $U_{\ell-1}$.

    Now, let $R\in\omega$ be such that $R$ avoids $S$ and let us assume there is an $i\in\mathbb N$ such that $R$ contains a vertex $v$ from~$U_i$.
    Hence, $v \in V(R_j)$ for some $1 \leq j \leq d^-(\omega)$.
    Since $R\in\omega$, it contains infinitely many vertices from the rays $R_1,\ldots,R_{d^-(\omega)}$.
    This implies that $R$ contains vertices from $X_n$ for every $n \in\mathbb N$.
    As $U_{i+1}$ separates some $X_m$ from $U_i$ in $D-S$ and $R$ is disjoint from $S$, we know that $vR$ intersects $U_{i+1}$.    

    Let us now assume that $R\in\omega$ does not meet any~$U_i$.
    By the choice of $R_1,\ldots, R_{d^-(\omega)}$, the ray $R$ meets some~$R_i$.
    Let $Q$ be the ray $x_0^iR_ixR$, where $x$ is a vertex in~$R_i\cap R$.
    Then $Q$ meets any $U_j$ only in $x_0^iR_ix$ and thus it meets only finitely many $U_j$.
    Since it meets at least~$U_1$, this contradicts the property that we just proved.
    Thus, the sequence $(U_i)_{i\in\mathbb N}$ satisfies the claim.
\end{proof}

Lemma~\ref{lem:weakExh} shows that under its assumptions there exists an exhausting sequence all of whose elements have size $\Delta^-(\omega)$: simply take the sequence $(U_i\cup S)_{i\in\mathbb N}$.
While this just seems to be a part of a special case of Theorem~\ref{thm:main}, it will actually help us in the proof of that theorem.

\begin{lemma}\label{lem:sequence}
Let $D$ be a digraph and let $\omega$ be a countable end of~$D$ with $d^-(\omega)\geq 1$.
Let $(U_i)_{i\in\mathbb N}$ be an $\omega$-exhausting sequence such that $|U_i| \leq k$ for some $k \in \mathbb{N}$ and all $i\in\mathbb N$.
Then the following hold.
\begin{enumerate}[label = \rm (\roman*)]
\item\label{itm:sequence1} For every $\eta\in\omega^-$, either there exists a finite vertex set $S$ that separates $\eta$ from~$\omega$ and lies in all but finitely many $U_i$ or $(U_i)_{i\in\mathbb N}$ is $\eta$-exhausting.
\item\label{itm:sequence3} There exists a finite vertex set $S$ that separates $\dom(\omega)$ from~$\omega$ and lies in all but finitely many~$U_i$.
\end{enumerate}
\end{lemma}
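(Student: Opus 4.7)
Set $S_\infty := \{v \in V(D) : v \in U_i \text{ for all but finitely many } i\}$. Since $|U_i| \leq k$ for every~$i$, at most $k$ vertices can lie in all but finitely many~$U_i$, so $|S_\infty| \leq k$; moreover a finite set $S$ lies in all but finitely many $U_i$ if and only if $S \subseteq S_\infty$. It therefore suffices, in both parts, to prove that $S_\infty$ itself serves as the desired separator (in part~(i), provided $(U_i)_{i\in\mathbb N}$ fails to be $\eta$-exhausting). Note also that by $\omega$-exhaustingness any family of $d$ pairwise disjoint rays in~$\omega$ forces $|U_i|\geq d$ for all sufficiently large~$i$, so $d^-(\omega) \leq k < \infty$.

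For part~(ii) my plan is to assume for a contradiction that $S_\infty$ does not separate $\dom(\omega)$ from~$\omega$. Then some $R \in \omega$ admits an infinite sequence of vertices $r_1, r_2, \ldots$ on~$R$ going to infinity together with dipaths $T_n$ from~$r_n$ to some $w_n \in \dom(\omega)$, all avoiding $S_\infty$; by passing to initial subdipaths the $T_n$ may be taken pairwise disjoint. A Star-Comb application (\Cref{lem:StarComb}) distinguishes two cases: either some fixed $v \in \dom(\omega) \setminus S_\infty$ equals $w_n$ for infinitely many~$n$, or the $w_n$ yield an out-comb whose spine, together with the $T_n$'s, gives an extra ray in an end $\leq \omega$ that already contradicts $d^-(\omega) \leq k$. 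In the remaining case, combining the infinitely many pairwise disjoint $r_n$-$v$ dipaths with the infinite $v$-$R$ fan (which exists because $v \in \dom(\omega)$) via a Menger-plus-compactness construction in the spirit of the proof of \Cref{thm:endDeg} produces $d^-(\omega) + 1$ pairwise disjoint rays in~$\omega$, contradicting the definition of~$d^-(\omega)$.

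For part~(i), I fix $\eta \in \omega^-$ and assume $(U_i)_{i\in\mathbb N}$ is not $\eta$-exhausting. Then some ray in~$\eta$ either meets no $U_i$ at all, or meets some $U_i$ while missing~$U_{i+1}$; in the second case, passing to the tail starting at the witnessing vertex in~$U_i$ gives a ray in~$\eta$ missing~$U_{i+1}$. Either way there is a ray $Q \in \eta$ missing some~$U_{j_0}$. Assume for contradiction that $S_\infty$ does not separate $\eta$ from~$\omega$: then for some $R \in \omega$ and every tail of~$R$ there is a ray in~$\eta$ starting in that tail and avoiding~$S_\infty$. Using the infinitely many pairwise disjoint $Q$-$R$ dipaths guaranteed by $\eta \leq \omega$ together with these escape rays, the same Menger-plus-compactness construction as in part~(ii) yields $d^-(\omega)+1$ pairwise disjoint rays in~$\omega$, again contradicting the maximality of~$d^-(\omega)$.

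The main obstacle will be the Menger-plus-compactness step that actually produces the extra disjoint ray in~$\omega$: one must reroute the existing $d^-(\omega)$ pairwise disjoint rays in~$\omega$ through the $v$-$R$ fan (respectively through the $Q$-$R$ dipaths in part~(i)) while keeping the new ray inside the digraph~$D - S_\infty$, so that its end in~$D$ really is $\omega$ and not some lower end blocked by~$S_\infty$. This parallels the bookkeeping in the proof of \Cref{thm:endDeg}, with the additional complication that the escape structures must persist in the compactness limit outside of~$S_\infty$; upgrading ``infinitely many disjoint dipaths avoiding $S_\infty$'' to a configuration stable enough under the compactness limit is the key technical point.
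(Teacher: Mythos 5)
There is a genuine gap, and it lies exactly where you flag the ``main obstacle'': the Menger-plus-compactness step that is supposed to produce $d^-(\omega)+1$ pairwise disjoint rays in~$\omega$ cannot be carried out, because the facts you feed into it simply do not imply the existence of an extra disjoint ray. Consider a ray $R$ together with one additional vertex $v$ and all edges from $V(R)$ to~$v$ and from $v$ to~$V(R)$. Here $v$ dominates the end $\omega$ of~$R$, there are infinitely many pairwise disjoint $R$--$v$ dipaths and an infinite $v$--$R$ fan, all avoiding $S_\infty=\emptyset$ --- i.e.\ every structural ingredient your construction for part~(ii) uses is present --- yet $d^-(\omega)=1$ and there is no second disjoint ray in~$\omega$. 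The same objection applies to part~(i): the escape rays you obtain live in~$\eta$, not in~$\omega$, and all of them may be forced through the same bounded region (compare Example~\ref{ex:combDegree}, where $d^-(\omega)=1$ although $\eta=[R^-]$ cannot be separated from~$\omega$ by any finite set). Your out-comb case has the same problem: a single extra ray in some end $\leq\omega$ contradicts nothing. So the target of your contradiction is wrong; the hypotheses you are refuting are jointly inconsistent, but not because they force too many disjoint rays.

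What they are inconsistent with is the $\omega$-exhausting property itself, and that is the mechanism the paper uses. The point is that every vertex outside $S_\infty$ lies in only finitely many~$U_i$. Hence if $S_\infty$ fails to separate $v\in\dom(\omega)$ (resp.\ $\eta$) from~$\omega$, one can build a \emph{single} ray of~$\omega$ as follows: start along an initial segment $Rr$ of~$R$ that already meets some~$U_N$, detour via a dipath avoiding~$S_\infty$ out to~$v$ (resp.\ out into a ray of~$\eta$ that misses the relevant~$U_{i+1}$), and return to a late tail of~$R$ using the $v$--$R$ fan (resp.\ the $\eta\leq\omega$ dipaths). Since the detour is finite and avoids~$S_\infty$, each of its vertices lies in only finitely many~$U_i$, so one can pick $m>N$ such that the whole detour and the chosen return tail of~$R$ avoid~$U_m$. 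The resulting ray lies in~$\omega$, meets $U_N$, and misses~$U_m$ --- contradicting that $(U_i)_{i\in\mathbb N}$ is $\omega$-exhausting. Your reduction to $S_\infty$ and your case split at the start of part~(i) (a ray of~$\eta$ avoiding all~$U_i$ versus one meeting $U_i$ but not~$U_{i+1}$) are fine and match the paper, but the closing argument needs to be replaced by this direct violation of exhaustingness rather than an appeal to the maximality of~$d^-(\omega)$.
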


\begin{proof}
Let $\eta\in\omega^-$.
Let us assume that there is no finite vertex set that separates $\eta$ from~$\omega$ and that lies in all but finitely many~$U_i$.
Let us suppose that there is a ray $Q\in \eta$ that avoids all vertex sets~$U_i$.
Let $S$ be the set of vertices that are contained in all but finitely many $U_i$.
Hence, $S$ is a finite set, and does not separate $\eta$ from~$\omega$ by assumption.
So there is a ray $R \in \omega$ such that for every tail $T$ of $R$ there is a ray $Q_T \in \eta$ starting at $T$ and avoiding~$S$.
By considering a tail, if necessary, and since $(U_i)_{i\in\mathbb N}$ is $\omega$-exhausting, we may assume that $R$ is disjoint from $S$ and the first vertex of~$R$ lies in~$U_n$ for some $n\geq 1$.
By assumption, we know that $Q_R \in \eta$ is a ray starting at a vertex $v$ on $R$ and avoiding~$S$.
Since $Q$ and $Q_R$ are equivalent, there exists a $Q_R$--$Q$ dipath $P_2$, starting at $q_R$ and ending at $q$, that is disjoint to $S$.
Then there exists $N\in\mathbb N$ with $N\geq n$ such that for all $j>N$ there is no common vertex of~$U_j$ and the ray $Q^* := RvQ_Rq_RP_2qQ$.
Because of $\eta\leq\omega$, there are infinitely many pairwise disjoint $qQ$--$R$ dipaths.
Thus, there exists one such dipath $P_3$ with first vertex $q^*$ and last vertex $r^* \neq r$ such that neither $P_3$ nor $r^*R$ contains any vertex from~$U_k$ for some $k>N$.
Then the ray $Q^*q^*P_3r^*R$ lies in~$\omega$, contains a vertex from $U_n$ but not from $U_k$.
This is a contradiction to $(U_i)_{i\in\mathbb N}$ being $\omega$-exhausting.
Thus, every ray in~$\eta$ meets some set~$U_i$.

Let us suppose that there exists a ray $Q\in\eta$ such that $Q$ contains a vertex from $U_i$ but not from $U_{i+1}$.
Let $x$ be on a ray $R\in\omega$ such that $xR$ avoids $U_{i+1}$, too, which is possible as $U_{i+1}$ is finite.
Since $Q\leq R$, there exists a $Q$--$xR$ dipath $P$ with starting vertex~$y$ and end vertex~$z$ that avoids $U_{i+1}$ such that $Qy$ meets $U_i$.
Then $QyPzR$ is a ray in~$\omega$ that contains a vertex from $U_i$ but not from $U_{i+1}$.
This is impossible since $(U_j)_{j\in\mathbb N}$ is $\omega$-exhausting.
Thus, \ref{itm:sequence1} follows.

Suppose that there is no finite vertex set separating $\dom(\omega)$ from~$\omega$ that lies in all but finitely many~$U_i$.
Let $S$ be the set of vertices that are contained in all but finitely many~$U_i$.
Hence, $S$ is a finite set and does not separate some $v \in \dom(\omega)$ from~$\omega$ by assumption.
Again, we may assume that there is a ray $R\in\omega$ that is disjoint from~$S$.
Similarly as in the proof of statement~\ref{itm:sequence1}, we can find a dipath $P$ starting at some $r \in R$ and ending in $v$ that avoids~$S$.
We may choose $r$ such that $Rr$ contains a vertex from some~$U_k$.
Then there exists $N\geq k$ such that $RrP$ contains a vertex $u\in U_N$ but $uRrP$ or $uP$, depending on whether $u$ lies on~$R$ or on~$P$, meets $U_M$ for $M\geq N$ at most in~$u$.
Furthermore, let $m > N$ such that $U_m \cap U_N \subseteq S$.
Then $uRrP$ or $uP$ is disjoint from~$U_m$.
Let $R'$ be a tail of~$R$ that is disjoint from~$U_m$.
Since there exists a $v$--$R'$ fan, there is a $v$--$R'$ dipath $P''$ with end vertex $r'$ on~$R'$ that avoids~$U_{m}$.
Then $uRrPvP'r'R'$ or $uPvP'r'R'$ is a ray in~$\omega$ that contains a vertex from~$U_N$ but avoids $U_m$, which contradicts that $(U_i)_{i\in\mathbb N}$ is $\omega$-exhausting.
This shows~\ref{itm:sequence3}.
\end{proof}

Now we are able to prove our main result.

\begin{thm}\label{thm:exhausting}
Let $D$ be a digraph and let $\omega$ be a countable end of~$D$ with $d^-(\omega) \geq 1$.
Then
\[
K(\omega):=\inf\left\{\liminf_{i\in\mathbb N}|U_i| \Bigm|(U_i)_{i\in\mathbb N} \text{ is an }\omega\text{-exhausting sequence}\right\}
\]
is the same as the combined in-degree $\Delta^-(\omega)$, where both values are considered in~$\mathbb N\cup\{\infty\}$.
\end{thm}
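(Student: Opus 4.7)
The proof establishes the two inequalities $K(\omega)\le\Delta^-(\omega)$ and $\Delta^-(\omega)\le K(\omega)$ separately.

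For $K(\omega)\le\Delta^-(\omega)$, I assume $\Delta^-(\omega)<\infty$ (else the inequality is trivial) and write $\Delta^-(\omega)=d^-(\omega)+|S|$ for a minimum finite separator $S$ of $\omega^-\cup\dom(\omega)$ from $\omega$. Applying Lemma~\ref{lem:weakExh} to $S$ produces a sequence $(U_i)_{i\in\mathbb N}$ of vertex sets of size $d^-(\omega)$ such that every ray in $\omega$ meets some $U_i$, and any ray in $\omega$ meeting $U_i$ while avoiding $S$ also meets $U_{i+1}$. Then $(U_i\cup S)_{i\in\mathbb N}$ is $\omega$-exhausting by a short case split: a ray in $\omega$ meeting $U_i\cup S$ either hits $S$ (and so hits $U_{i+1}\cup S$) or hits $U_i$ while avoiding $S$ (and so meets $U_{i+1}\subseteq U_{i+1}\cup S$). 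Since $|U_i\cup S|\le d^-(\omega)+|S|=\Delta^-(\omega)$, this yields $K(\omega)\le\Delta^-(\omega)$.

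For the reverse direction, I assume $K(\omega)=k<\infty$ and fix an $\omega$-exhausting sequence $(U_i)$ with $\liminf_{i\in\mathbb N}|U_i|=k$. Since any subsequence of an $\omega$-exhausting sequence is itself $\omega$-exhausting --- both defining conditions iterate along the subsequence --- I may pass to a subsequence with $|U_i|\le k$ for all $i$. A pigeonhole argument then gives $d^-(\omega)\le k$: taking $m$ pairwise disjoint rays in $\omega$, each eventually meets $U_i$ at distinct vertices, so $|U_i|\ge m$ for large $i$. I set $m:=d^-(\omega)$ and fix disjoint rays $R_1,\dots,R_m\in\omega$.

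The core task is now to produce a finite separator $S$ of $\omega^-\cup\dom(\omega)$ from $\omega$ with $|S|\le k-m$. Applying Lemma~\ref{lem:sequence}, part~(ii) gives a finite $S_{\dom}$ separating $\dom(\omega)$ from $\omega$ with $S_{\dom}\subseteq U_i$ for almost every $i$, and part~(i) partitions $\omega^-$ into a set $F$ of ends $\eta$ admitting a finite separator $S_\eta\subseteq U_i$ (for almost every $i$) separating $\eta$ from $\omega$, and a complementary set $E$ of ends for which $(U_i)$ is itself $\eta$-exhausting. The union $S^0:=S_{\dom}\cup\bigcup_{\eta\in F}S_\eta$ is a finite set of size at most $k$ (any finite subset of it lies in some $U_i$) separating $\dom(\omega)\cup F$ from $\omega$.

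The main obstacle is to handle the ends in $E$, which are not directly captured by the stable vertices of $(U_i)$, while simultaneously tightening the size bound from $k$ down to $k-m$. My plan is to modify $(U_i)$: for each $\eta\in E$, I exploit that $\eta<\omega$ strictly (via a Menger-type argument using $\omega\not\le\eta$) to produce a finite separator $S_\eta^*$ of $\eta$ from $\omega$, and then replace the moving vertex of $U_i$ on an $\eta$-ray by a fixed vertex of $S_\eta^*$, producing a new $\omega$-exhausting sequence $(U_i')$ of the same bounded size in which every separator vertex is stable. If this simultaneous swap preserves $\omega$-exhaustion and if $E$ can be shown to be finite of cardinality at most $k-m$ --- by arranging disjoint representative rays for the $E$-ends so that each contributes its own vertex to $U_i$ alongside the $m$ ray-vertices of $R_1,\dots,R_m$ --- then the stable part of $(U_i')$ will be the desired separator of size at most $k-m$, and $\Delta^-(\omega)\le m+|S|\le k$ follows. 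Verifying that these swaps jointly respect $\omega$-exhaustion across potentially many $\eta\in E$, and controlling the interaction between representative rays and separator vertices in $U_i$, is where I expect the main technical effort to concentrate.
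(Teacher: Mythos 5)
Your first inequality, $K(\omega)\le\Delta^-(\omega)$, is correct and coincides with the remark the paper makes immediately after Lemma~\ref{lem:weakExh}: taking $(U_i\cup S)_{i\in\mathbb N}$ for a minimum separator $S$ gives an $\omega$-exhausting sequence of sets of size at most $d^-(\omega)+|S|$. Your reduction of the converse to finding a separator of $\omega^-\cup\dom(\omega)$ from $\omega$ of size at most $k-d^-(\omega)$ is also the right target, and the pigeonhole argument giving $d^-(\omega)\le k$ is fine.

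The gap is in how you propose to handle the ends $\eta\in E$, i.e.\ those for which $(U_i)_{i\in\mathbb N}$ is itself $\eta$-exhausting, and this is precisely the crux of the whole theorem. First, the assertion that ``a Menger-type argument using $\omega\not\le\eta$'' yields a finite separator $S_\eta^*$ of $\eta$ from $\omega$ is not justified: separation here is not a cut between two vertex sets but the statement that every ray of $\omega$ has a tail from which no $S$-avoiding ray of $\eta$ starts, and $\omega\not\le\eta$ only controls dipaths in the opposite direction. Even granting existence, $|S_\eta^*|$ is uncontrolled, whereas your counting requires each $\eta\in E$ to contribute exactly $d^-(\eta)$ vertices; and the proposed ``swap'' of moving $U_i$-vertices for fixed $S_\eta^*$-vertices is never shown to preserve $\omega$-exhaustion, which you yourself flag as the remaining technical effort. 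The paper resolves exactly this difficulty by introducing the intermediate quantity $\delta^-(\omega)$ (an infimum over partitions $(A,B)$ of $\omega^-\cup\{\omega\}$ of $|S|+\sum_{\eta\in B}d^-(\eta)$), proving the cycle $\delta^-\le\Delta^-\le K\le\delta^-$: from the partition achieving $\delta^-$ it builds, via Lemma~\ref{lem:weakExh}, one auxiliary sequence $(U^j_i)_{i\in\mathbb N}$ of size $d^-(\eta_j)$ per end $\eta_j\in B$, and the two Claims in that proof (the second relying on the Star-Comb Lemma~\ref{lem:StarComb} and on the domination hypothesis) show that the \emph{first terms} $S\cup\bigcup_{j<|B|}U^j_1$ already separate $\omega^-\cup\dom(\omega)$ from $\omega$. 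That is the step that converts the ``moving'' ray-vertices of the lower ends into an honest separator of the right size, and it is exactly what is missing from your sketch; without an argument of this kind the inequality $\Delta^-(\omega)\le K(\omega)$ is not established.
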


\begin{proof}
Let us define the following:
\begin{align*}
\delta^-(\omega):=\inf\Bigg\{|S|+\sum_{\eta\in B}d^-(\eta)\Biggm|\ &A\cup B=\omega^-\cup\{\omega\}, A\cap B=\emptyset, \omega\in B,\\
&S\subseteq V(D)\text{ separates }A\cup \dom(\omega)\text{ from }B\Bigg\}.
\end{align*}
We will actually prove that $K(\omega)=\delta^-(\omega)=\Delta^-(\omega)$, where we do not distinguish between distinct infinite cardinals.
Trivially, we have $\delta^-(\omega)\leq\Delta^-(\omega)$.

\medskip

In order to prove $K(\omega) \geq \delta^-(\omega)$, it suffices to prove the assertion for finite~$K(\omega)$.
Let $(U_i)_{i\in\mathbb N}$ be an $\omega$-exhausting sequence.
By thinning out the sequence $(U_i)_{i\in\mathbb N}$, we may assume without loss of generality that all sets $U_i$ have the same finite size.
Let $(A,B)$ be a partition of $\omega^-\cup\{\omega\}$ such that $B$ consists of those ends $\eta$ in $\omega^-\cup\{\omega\}$ for which $(U_i)_{i\in\mathbb N}$ is an exhausting sequence and such that the (finite) set of all those vertices that lies in all but finitely many $U_i$ does not separate $\eta$ from~$\omega$.
By Lemma~\ref{lem:sequence}\,\ref{itm:sequence3}, there exists a finite vertex set $S_\dom$ that separates $\dom(\omega)$ from~$\omega$ and that lies in all but finitely many~$U_i$.
Since $(U_i)_{i\in\mathbb N}$ is an exhausting sequence for every element $\eta\in B$, there exists by Lemma~\ref{lem:sequence}\,\ref{itm:sequence1}, for every $\mu\in A\cap \eta^-$, a finite vertex set $S_{\mu,\eta}$ that separates $\mu$ from~$\eta$ and lies in all but finitely many~$U_i$.
We claim that for every $\mu\in A \setminus \eta^-$ the set $S_{\mu,\omega}$ separates $\mu$ from~$\eta$ and lies in all but finitely many~$U_i$.
If $S_{\mu,\omega}$ does not separate $\mu$ from~$\eta$, then $S_{\mu,\omega}$ does not separate $\mu$ from~$\omega$, as $S_{\mu,\omega}$ is not separating $\eta$ from $\omega$ by definition of~$B$, a contradiction.
As~$K(\omega)$ is finite, there exists a finite vertex set~$S$ that separates~$A$ from~$B$ and that lies in all but finitely many~$U_i$.

If $B$ is infinite or has an element of infinite in-degree, then there exist more than $K(\omega)$ pairwise disjoint rays in elements of~$B$.
Since $(U_i)_{i\in\mathbb N}$ is $\omega$-exhausting, there exists an $N\in\mathbb N$ such that for all $i\geq N$ the set $U_i$ contains vertices from more than~$K(\omega)$ many of these rays.
This contradicts the definition of~$K(\omega)$.
Hence, $B$ is finite and every element of~$B$ has finite in-degree.
Thus, the maximum number of pairwise disjoint rays in elements of~$B$ is finite and is the same as $\sum_{\eta\in B}d^-(\eta)$.
Since $S\cup S_\dom$ is finite, we may assume that there are $\sum_{\eta\in B}d^-(\eta)$ many pairwise disjoint rays in elements of~$B$ each of which contains no vertex from $S\cup S_\dom$.
Since eventually all $U_i$ must contain a vertex from each of those rays, this completes the proof of $K(\omega)\geq \delta^-(\omega)$.

\medskip

In order to prove $K(\omega)\leq \delta^-(\omega)$, let us now assume that $\delta^-(\omega) < \infty$, i.\,e.\ there are a partition $(A,B)$ of~$\omega^-\cup\{\omega\}$ with $\omega\in B$ and some vertex set $S$ separating $A\cup\dom(\omega)$ from~$B$ such that $|S|+\sum_{\eta\in B}d^-(\eta)$ is finite.
In particular, $\delta^-(\omega)$ being finite implies that $B$ and~$S$ are finite.
Note that no end $\eta$ in~$B$ is separated from~$\omega$ by~$S$, as moving $\eta$ together with all ends in $B$ from which $\eta$ is not separated from~$B$ to~$A$ would make the value of~$\delta^-(\omega)$ smaller.
Let $\eta_1,\ldots,\eta_{|B|}$ be an enumeration of~$B$ such that $i\leq j$ if $\eta_i\leq \eta_j$.
Next we claim that, for every $i \in \{ 1, \ldots, |B| \}$, there is a finite vertex set $S_i$ that contains $S$ and separates all ends $\mu < \eta_i$ from $\eta_i$.
Since $B$ is finite, we can separate all ends $\mu < \eta_i$ with $\mu \in B$ with a finite vertex set $S_i'$ from $\eta_i$.
Hence, we can separate all ends $\mu < \eta_i$ with $\mu \in \omega^-$ from $\eta_i$ by $S \cup S_i'$.
Suppose there were a $\mu < \eta_i$ outside of $\omega^-$ which is not separated by $S \cup S_i'$ from $\eta_i$.
Then, we have $\mu \in \eta_i^-\subseteq \omega^-$, a contradiction.
Note that every vertex dominating an end $\eta_j < \omega$ also dominates $\omega$.
Therefore, every vertex dominating $\eta_i$ is separated from $\eta_i$ by~$S_i$ because $\eta_i \in B$ and $S \subseteq S_i$.
Thus, Lemma~\ref{lem:weakExh} implies that there are sequences $(U^j_i)_{i\in\mathbb N}$, for every $j \in \{ 1, \ldots, |B| \}$, with $|U^j_i|=d^-(\eta_j)$ for all $i\in\mathbb N$ such that every ray in~$\eta_j$ meets some $U^j_i$ and if it meets $U^j_i$ and avoids $S_j$ then it also meets $U^j_{i+1}$.
Let $R\in\omega$ be a ray.
Since $S$ does not separate any $\eta_j$ from~$\omega$, we may assume that there is a dipath from $R$ to every vertex in $U^j_i$ for every $1\leq j\leq |B|$ and every $i\in\mathbb N$.
For every $1\leq j\leq |B|$, let $\mathcal R^j$ be a maximal set of pairwise disjoint rays in~$\eta_j$ starting at~$U^j_1$.

\begin{claim}\label{clm:exhausting0}
    We may choose the $(U^j_i)_{i\in\mathbb N}$ such that there is a dipath from every tail of~$R$ to every vertex in $U^j_i$ that avoids~$S$ for every $1\leq j\leq |B|$ and every $i\in\mathbb N$ and such that the elements of all $\mathcal R^j$ for $1\leq j\leq |B|$ are disjoint from~$S$.
\end{claim}

\begin{claimproof}
    Since $S$ does not separate $\eta_j$ from~$\omega$, there exists a ray $Q \in \eta_j$ starting at a vertex on an arbitrary tail of~$R$ so that $Q$ is disjoint from~$S$.
    Hence, $Q$ intersects all but finitely many $U_i^j$.
    Let us now fix $i \in \mathbb N$ such that $Q$ intersects $U_i^j$, and let $x \in V(Q) \cap U_i^j$.
    Then there exists $N\geq i$ such that for every $i' \geq N$ and every $y\in U_{i'}^j$ there exists an $x$--$y$ dipath avoiding~$S$.
    Since $S$ is finite, the claim follows by taking a suitable subsequence of $(U^j_i)_{i\in\mathbb N}$ and thus taking the elements of $\mathcal R^j$ as suitable tails of the original ones.    
\end{claimproof}

\begin{claim}\label{clm:exhausting1}
    We may choose the sequences $(U^j_i)_{i\in\mathbb N}$ such that every ray in~$\eta_j$ that starts at $U_1^\ell$ with $\ell>j$ and avoids~$S$, and hence also every ray in~$\eta_j$ that starts at some $U^\ell_i$ with $\ell>j$ and avoids~$S$, must have a vertex from $\bigcup_{k\leq j}U_1^k$.
\end{claim}

\begin{claimproof}
    Let us suppose that the claim is false and that $j$ is smallest such that the claim fails for~$j$.
    In particular, the sets $U_1^k$ with $k<j$ are defined such that they have the desired property.
    Then there exists for infinitely many $i\geq 2$ a ray $Q_i \in \eta_j$ starting in some $U_1^\ell$ with $\ell>j$ that avoids $\bigcup_{k<j}U_1^k\cup U_i^j\cup S$, as otherwise we could replace the sequence $(U_i^j)_{i \in \mathbb{N}}$ by a suitable final subsequence.
    Since $Q_i\in\eta_j$, there exists a first vertex $x_i$ on~$Q_i$ that lies on some $Q\in\mathcal R^j$ such that $x_iQ$ does not meet $\bigcup_{k<j}U_1^k\cup U_i^j$.
    Note that $\{ x_i\mid i\in\mathbb N\}$ must be infinite.
    In the subdigraph induced by the starting dipaths $Q_ix_i$, there exists by Lemma~\ref{lem:StarComb} either a subdivided infinite out-star with leaves in elements of~$\mathcal R^j$ or an out-comb with teeth on elements of~$\mathcal R^j$.
    Let us first suppose that there exists a subdivided infinite out-star with centre $x$ and leaves in elements of~$\mathcal R^j$.
    Clearly, there exists an $x$--$R'$ fan for every ray $R' \in \eta_j$.
    And since $\eta_j \leq \omega$, there is also an $x$--$R''$ fan for every ray $R'' \in \omega$.
    By Claim~\ref{clm:exhausting0}, each $U_i^j$ can be reached from every tail of~$R$ via a dipath avoiding~$S$.
    Since all rays $Q_i$ avoid~$S$, we know that $x$ is reached from every tail of~$R$ via a dipath avoiding~$S$.
    So $x$ is dominating $\omega$, but not separated from it by~$S$, a contradiction to the choice of~$S$.
    
    Thus, we find an out-comb with spine~$T$ and teeth in $\{x_i\mid i\geq 2\}$ in that subdigraph.
    We may assume that $T$ has its first vertex in some $U_1^\ell$ with $\ell>j$.
    As $\mathcal R^j$ is finite, one of its elements contains infinitely many teeth of the out-comb.
    So~$T$ must lie in an element of $\omega^-\cup\{\omega\}$, but as all rays $Q_i$ avoid~$S$, it cannot lie in~$A$, so it lies in some $\eta_k \leq \eta_j$.
    By the minimal choice of~$j$, if $k<j$, then $T$ meets $\bigcup_{i<k}U_1^i$, which is impossible as all the dipaths $Q_ix_i$ avoid that set.
    Hence, we have $k=j$.
    By the choice of $(U^j_i)_{i\in\mathbb N}$, $T$ must contain a vertex from some~$U^j_m$, and thus from every $U^j_i$ for $i\geq m$ as the sequence was chosen according to Lemma~\ref{lem:weakExh}.
    This shows that infinitely many $Q_i$ contain a vertex from all $U^j_i$ for $i\geq m$, which contradicts their choices.
    Thus, there exists $n\in\mathbb N$ such that all rays in $\eta_j$ starting at some $U^\ell_1$ with $\ell>j$ that avoid $\bigcup_{k<j}U_1^k\cup S$ contain a vertex from~$U^j_n$.
    Removing all elements before $U^j_n$ from the sequence $(U^j_i)_{i\in\mathbb N}$ leaves with a sequence for~$j$ as desired.
\end{claimproof}

Note that Claim~\ref{clm:exhausting1} implies that $S\cup\bigcup_{j<|B|}U^j_1$ separates $\omega^-\cup \dom(\omega)$ from~$\omega$.
Furthermore, Claim~\ref{clm:exhausting1} implies that we may have chosen $S_i=S\cup\bigcup_{j<i}U^j_1$, which we assume for the rest of the proof.

We will define a sequence $(V_i)_{i\in\mathbb N}$ that is $\omega$-exhausting.
We set
\[
V_1:=S\cup \bigcup_{1\leq j\leq |B|}U_1^j
\]
and
\[
V_i:=S\cup\bigcup_{1\leq j< |B|}U_1^j\cup U^{|B|}_i.
\]

Note that
\[
V_i=V_1\sm U_1^{|B|}\cup U^{|B|}_i=S_{|B|}\cup U^{|B|}_i.
\]
Let $Q$ be a ray in~$\omega$.
By definition of $(U_i^{|B|})_{i\in\mathbb N}$, the ray~$Q$ contains a vertex from some~$U_i^{|B|}$.
If it also contains a vertex of~$S_{|B|}$, then the ray contains a vertex from all $V_j$ for $j\geq i$ by their definition.
If $Q$ contains no vertex from $S_{|B|}$, then the definition of the $(U_k^{|B|})_{k\in\mathbb N}$
implies that $Q$ contains a vertex from $U^{|B|}_{i+1}$ and inductively from all $U^{|B|}_j$ for $j\geq i$.
Thus, the sequence $(V_i)_{i\in\mathbb N}$ is $\omega$-exhausting.
This implies $K(\omega)\leq\delta^-(\omega)$.

\medskip

Since $\delta^-(\omega)\leq K(\omega)$ and since the sequence $(V_i)_{i\in\mathbb N}$ that we constructed in the proof of $K(\omega)\leq\delta^-(\omega)$ is $\omega$-exhausting and has the property $|V_i|\leq\delta^-(\omega)$ for all $i\in\mathbb N$, we have $|V_i|\leq K(\omega)$.
Since all $V_i$ have the same size, we have $|V_i|=K(\omega)$ for all $i\in\mathbb N$.
As $S_{|B|}$ separates $\omega^-\cup \dom(\omega)$ from~$\omega$, this implies
\[
\Delta^-(\omega)\leq |S_{|B|}|+d^-(\omega)=|V_1|=K(\omega).
\]

Thus, we have proved
\[
\delta^-(\omega)\leq\Delta^-(\omega)\leq K(\omega)\leq \delta^-(\omega),
\]
which completes the proof.
\end{proof}

\section{Edge-degrees of ends}
\label{sec:edge deg}

In this section, we will consider edge-disjoint rays in ends of digraphs and discuss the corresponding structural results for ends containing an arbitrary or an infinite number of pairwise edge-disjoint rays.
The proofs of the results are essentially the same as those given in Section~\ref{sec:deg} and Section~\ref{sec:ex}, which is why we omit them here.
The first result corresponds to Theorem~\ref{thm:endDeg} for edge-disjoint rays.

\begin{thm}\label{thm:EdgeDegree}
Let $D$ be a digraph.
\begin{enumerate}[label = {\rm (\roman*)}]
\item If an end of~$D$ contains $n$ pairwise edge-disjoint rays for all $n\in\mathbb N$, then it contains infinitely many pairwise edge-disjoint rays.
\item If an end of~$D$ contains $n$ pairwise edge-disjoint anti-rays for all $n\in\mathbb N$, then it contains infinitely many pairwise edge-disjoint anti-rays.\qed
\end{enumerate}
\end{thm}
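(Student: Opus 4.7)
The plan is to adapt the proof of Theorem~\ref{thm:endDeg} by replacing vertex-disjointness with edge-disjointness throughout and substituting the edge version of Menger's theorem for the vertex version. As before, (ii) reduces to (i) by reversing every edge of $D$, so it suffices to prove (i). Fix an end $\omega$ that contains, for every $n \in \mathbb{N}$, a set of $n$ pairwise edge-disjoint rays, and fix $R \in \omega$. We recursively construct, for each $n \geq 1$, a set $\mathcal{R}^n = \{R_1^n, \ldots, R_n^n\} \subseteq \omega$ of pairwise edge-disjoint rays, landmark vertices $x_i^n \in V(R_i^n)$, and a set $\mathcal{P}^n$ of pairwise edge-disjoint dipaths witnessing equivalence with~$R$, satisfying the obvious edge-disjoint analogues of conditions~(1)--(4) in the proof of Theorem~\ref{thm:endDeg}. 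The limit segments $\bigcup_n R_i^n x_i^n$ for $i \in \mathbb{N}$ then form infinitely many pairwise edge-disjoint rays in~$\omega$. Note that since any two rays in~$\omega$ admit infinitely many pairwise vertex-disjoint (hence edge-disjoint) dipaths between them, all the finite edge-disjoint dipath systems needed below are available.

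For the inductive step, take $n+1$ pairwise edge-disjoint rays $Q_1, \ldots, Q_{n+1}$ in~$\omega$, replace them by tails avoiding the (finite) edge set already used, and for each pair $(i,j)$ pick $n$ pairwise edge-disjoint $x_i^n R_i^n$--$Q_j$ dipaths $P_{i,j}^1, \ldots, P_{i,j}^n$ that avoid the forbidden edges. Build the finite auxiliary digraph $D'$ from the relevant segments of the $R_i^n$'s, the dipaths $P_{i,j}^k$, and the initial portions of the $Q_j$ up to vertices $y_j$ lying past all $P_{i,j}^k$-endpoints. Adjoin a super-source $s$ with edges $s x_i^n$ for $1 \leq i \leq n$ and a super-sink $t$ with edges $y_j t$ for $1 \leq j \leq n+1$, and apply the edge version of Menger's theorem.

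The crux is to show that no edge set $F$ with $|F| < n$ separates $s$ from $t$ in the augmented digraph. Write $a$, $b$, $c$ for the numbers of edges of $F$ incident to $s$, incident to $t$, and lying inside $D'$, so $a + b + c < n$. Since the $R_i^n$ are pairwise edge-disjoint, the $n$ edge sets $E(x_i^n R_i^n h_i)$ are pairwise disjoint; at most $c$ of them meet $F$, and $a + c < n$ guarantees an index $i_0$ with $s x_{i_0}^n \notin F$ and $E(x_{i_0}^n R_{i_0}^n h_{i_0}) \cap F = \emptyset$. Symmetrically, $b + c < n + 1$ yields $j_0$ with $y_{j_0} t \notin F$ and the relevant segment of $Q_{j_0}$ disjoint from $F$. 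Finally, among the $n$ pairwise edge-disjoint dipaths $P_{i_0, j_0}^k$, at most $c < n$ meet $F$, so some $P_{i_0, j_0}^{k_0}$ is $F$-free. Concatenation produces an $s$-$t$ dipath in $(D' \cup \{s,t\}) - F$, establishing the claim.

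Menger's edge version then supplies $n$ edge-disjoint $X^n$-$Y$ dipaths with distinct endpoints (since $|Y| = n + 1$), from which we define $R_i^{n+1}$ by extending $R_i^n x_i^n$ along the corresponding $Q_{\pi(i)}$, and take $R_{n+1}^{n+1}$ as a tail of the unused $Q_j$ chosen edge-disjoint from the other $R_i^{n+1}$'s. Updating $\mathcal{P}^n$ to $\mathcal{P}^{n+1}$ and choosing new landmarks $x_i^{n+1}$ past all dipaths in $\bigcup_{j\leq n+1}\mathcal{P}^j$ proceeds essentially verbatim as in the original argument, using only finitely many forbidden edges and that every $R_i^{n+1}$ remains in~$\omega$. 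The main obstacle is precisely the edge-Menger count above: unlike in the vertex setting, where a single vertex blocks any ray through it and the pigeonhole argument is immediate, here an edge blocks only itself, so one must balance the budget $|F| < n$ against the three edge-disjoint families \emph{and} the source/sink incidences simultaneously, as done via the triple $(a,b,c)$.
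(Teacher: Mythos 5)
Your overall route --- reducing (ii) to (i) by edge reversal, rerunning the recursion from the proof of Theorem~\ref{thm:endDeg} with edge-disjointness in place of vertex-disjointness, and replacing vertex-Menger by edge-Menger via a super-source and super-sink --- is exactly the adaptation the paper intends (it omits the proof, asserting it is ``essentially the same'' as in Section~\ref{sec:deg}), and your $(a,b,c)$ count showing that no edge set $F$ with $|F|<n$ separates $s$ from $t$ is correct as far as it goes.

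There is, however, a genuine gap at the step you dismiss as ``essentially verbatim'', and it is precisely the one place where the edge setting differs substantively from the vertex setting. In the vertex version, $R_i^{n+1}:=R_i^nx_i^nP_iy_iQ_i$ is automatically a ray because all the pieces meet only at the designated junctions. In the edge version the pieces are merely edge-disjoint and may share vertices, so this concatenation is in general only a walk. Concretely: the auxiliary digraph $D'$ must contain the segments $x_{i'}^nR_{i'}^nh_{i'}$ for \emph{all} $i'$ (your pigeonhole count over $a+c<n$ needs all $n$ of them as potential witness routes), and since $R_{i'}^n$ is only edge-disjoint from $R_i^n$, such a segment may pass through vertices of $R_i^nx_i^n$ other than $x_i^n$; edge-Menger gives you no control over which vertices of $D'$ the path $P_i$ traverses, so $P_i$ may revisit $V(R_i^nx_i^n)$. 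One cannot repair this afterwards by extracting a genuine ray from the walk, because any such extraction may shortcut past part of $R_i^nx_i^n$ and thereby destroy the condition that $R_i^nx_i^n$ be a proper starting subdipath of $R_i^{n+1}x_i^{n+1}$ --- the condition on which the convergence of $\bigcup_n R_i^nx_i^n$ to a ray rests (and it also endangers the argument that the limit object is equivalent to~$R$). Nor can the vertex-sharing be legislated away by passing to tails: two edge-disjoint rays may share infinitely many vertices, interleaved in a crossing pattern so that no choice of cut-vertices $x_i^n$ tending to infinity makes $V(R_i^nx_i^n)$ and $V(x_{i'}^nR_{i'}^n)$ disjoint for $i\neq i'$; the same phenomenon affects the tails $y_jQ_j$ against the segments of the other $Q_{j'}$ inside $D'$. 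So the assembly and limit steps require an additional idea --- some mechanism guaranteeing that each vertex is revisited only finitely often and that genuine rays extending a fixed nested sequence of initial segments, still lying in~$\omega$, can be extracted --- and the proposal, which locates the whole difficulty in the Menger count, does not supply one.
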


Theorem \ref{thm:EdgeDegree} enables us to define the \emph{edge-in-degree} (the \emph{edge-out-degree}) of an end of a digraph as the maximum number of pairwise edge-disjoint rays (anti-rays) in that end.

Similarly as for vertex-disjoint rays and anti-rays as in Theorem~\ref{thm:InfRaysInfAntiRays}, we obtain a digraph that has infinitely many pairwise edge-disjoint rays and infinitely many pairwise edge-disjoint anti-rays such that every ray and every anti-ray share an edge.

\begin{thm}\label{thm:ExEdgeDisj}
There exists a digraph $D$ with the following properties:
\begin{enumerate}[label = \rm (\roman*)]
    \item\label{itm:ExEdgeDisj1} $D$ contains infinitely many pairwise edge-disjoint rays.
    \item\label{itm:ExEdgeDisj2} $D$ contains infinitely many pairwise edge-disjoint anti-rays.
    \item\label{itm:ExEdgeDisj3} Every ray and every anti-ray of~$D$ share an edge.
\end{enumerate}
\end{thm}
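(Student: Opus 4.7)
The plan is to adapt the digraph $D$ constructed in the proof of Theorem~\ref{thm:InfRaysInfAntiRays} by replacing each vertex $v$ of~$D$ with a small gadget that forces any dipath through~$v$ to traverse a common edge. Concretely, I would define a digraph~$D'$ on the vertex set $\{v^-,v^+\mid v\in V(D)\}$ by adding, for every $v\in V(D)$, the \emph{gadget edge} $v^-\to v^+$, and by replacing every edge $u\to v$ of~$D$ with the edge $u^+\to v^-$ in~$D'$. The essential feature of this construction is that the only in-edge of $v^+$ in~$D'$ is the gadget edge at~$v$ and the only out-edge of $v^-$ in~$D'$ is the gadget edge at~$v$.

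Properties~\ref{itm:ExEdgeDisj1} and~\ref{itm:ExEdgeDisj2} then follow immediately: each of the infinitely many pairwise vertex-disjoint rays in~$D$ (resp.\ pairwise vertex-disjoint anti-rays in~$D$) guaranteed by Theorem~\ref{thm:InfRaysInfAntiRays} lifts to a ray (resp.\ anti-ray) of~$D'$ obtained by interleaving its edges with the gadget edges along the way, and these lifts are pairwise vertex-disjoint, hence pairwise edge-disjoint in~$D'$.

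For~\ref{itm:ExEdgeDisj3}, the central step is to observe that contracting all gadget edges of~$D'$ recovers~$D$, and that under this contraction every ray $Q'$ of~$D'$ projects to a ray $Q$ of~$D$ and every anti-ray $P'$ of~$D'$ projects to an anti-ray $P$ of~$D$: by the essential feature above, whenever $Q'$ (or $P'$) meets the gadget at~$v$ it visits both $v^-$ and $v^+$ consecutively via the gadget edge, with only the trivial exceptions that $Q'$ may start at $v^+$ (and never meet $v^-$) or $P'$ may end at $v^-$ (and never meet $v^+$). Iterating Theorem~\ref{thm:InfRaysInfAntiRays}\ref{itm:example3} on successive tails of $Q$ and~$P$ then yields that $Q$ and~$P$ share infinitely many vertices of~$D$; at every such shared vertex~$v$, both $Q'$ and~$P'$ meet the gadget at~$v$, and aside from at most one $v$ where $Q'$ begins at $v^+$ and at most one $v$ where $P'$ ends at~$v^-$, both $Q'$ and~$P'$ are forced to use the gadget edge. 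Therefore $Q'$ and~$P'$ share infinitely many edges, in particular at least one.

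The main obstacle is the bookkeeping in the previous paragraph: one must verify that the contracted walks of $Q'$ and $P'$ in~$D$ really are a ray and an anti-ray (rather than walks that revisit some vertex), and that the two boundary cases are indeed the only possibilities. Both points follow cleanly from the observation that $v^+$ has a unique in-neighbour and $v^-$ has a unique out-neighbour in~$D'$, which is precisely what was built into the gadget, so the remaining argument is essentially the same as the one given in the proof of Theorem~\ref{thm:InfRaysInfAntiRays}.
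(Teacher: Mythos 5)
Your construction is exactly the one in the paper (split each vertex $v$ into $v^-,v^+$ with the gadget edge $v^-v^+$ and route every original edge $uv$ through $u^+v^-$), and your argument for \ref{itm:ExEdgeDisj3} is the same reduction to Theorem~\ref{thm:InfRaysInfAntiRays}\ref{itm:example3}, merely stated in the contrapositive-free direction with the boundary cases spelled out. The proposal is correct and takes essentially the same approach as the paper.
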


\begin{proof}
    Let $D'$ be the digraph constructed in the proof of Theorem~\ref{thm:InfRaysInfAntiRays}.
    We replace every vertex $u$ by two vertices $u^-$ and $u^+$ and every edge $uv$ by the edge $u^+v^-$.
    We add also all edges of the form $u^-u^+$ for $u\in V(D')$ in order to obtain the digraph $D$.
    Obviously, $D$ satisfies \ref{itm:ExEdgeDisj1} and \ref{itm:ExEdgeDisj2}.
    Since every vertex of~$D$ has either a unique out-neighbour or a unique in-neighbour, edge-disjoint dipaths in~$D$ induce disjoint dipaths in~$D'$.
    Thus, $D$ must satisfy~\ref{itm:ExEdgeDisj3}.
\end{proof}

\section*{Acknowledgement}

We thank Florian Reich for discussions that led to a simpler definition of the combined in-degree of an end.

\bibliographystyle{amsplain}
\bibliography{degree}

\end{document}